\documentclass[final]{siamart250211}

\usepackage{lipsum}
\usepackage{amsfonts}
\usepackage{graphicx}
\usepackage{epstopdf}
\usepackage{algorithmic}
\ifpdf
  \DeclareGraphicsExtensions{.eps,.pdf,.png,.jpg}
\else
  \DeclareGraphicsExtensions{.eps}
\fi

\newsiamremark{remark}{Remark}
\newsiamremark{hypothesis}{Hypothesis}
\crefname{hypothesis}{Hypothesis}{Hypotheses}
\newsiamthm{claim}{Claim}
\newsiamremark{fact}{Fact}
\crefname{fact}{Fact}{Facts}

\headers{a posteriori error estimates for E-Adams2 method}{X. F. Hu and W. S. Wang}

\title{a posteriori error estimates for the two-step explicit exponential adams method for parabolic equations\thanks{Submitted to the editors DATE.
\funding{The first author was partially supported by the Natural Science Foundation of China (Grant Nos. 12271367, 12071419) and  the Natural Science Foundation of Shandong Province, China (Grant No. ZR2024MA056). The second author is the corresponding author. He was supported by the
Natural Science Foundation of China (Grant No. 12271367), Shanghai Science and Technology Planning Projects (Grant No. 20JC1414200), and Natural Science Foundation of Shanghai (Grant No. 20ZR1441200).}}}

\author{Xianfa Hu\footnotemark[2] \footnotemark[3]\thanks{School of Mathematics and Statistics, Suzhou University of Technology, Changshu 215500,  People's Republic of China
		(\email{zzxyhxf@163.com}.)} 
		\and Wansheng Wang\thanks{Department of Mathematics, Shanghai Normal University, Shanghai 200234, People's Republic of China
  	(\email{w.s.wang@163.com}.)}
}

\usepackage{amsopn}

\usepackage{enumerate,amsmath,amssymb,graphicx,subcaption}
\newtheorem{example}{Example}[section]
\newtheorem{assumption}{Assumption}[section]
\ifpdf
\hypersetup{
	pdftitle={An Example Article},
	pdfauthor={D. Doe, P. T. Frank, and J. E. Smith}
}
\fi

\begin{document}
	
	\maketitle
	
	\begin{abstract}
		In this paper, we derive optimal order  a posteriori error estimates for the variable step-size explicit two-step exponential Adams (E-Adams2) method for parabolic problems.  We begin by introducing an E-Adams2 approximation, defined  by  the piecewise linear approximate solutions, which  leads to  suboptimal error estimates. To recover  optimal order error estimates, we  introduce an appropriate reconstruction of the approximation with the second order residual for the explicit E-Adams2 method, which plays key roles in deriving optimal order a posteriori error estimates  for the proposed explicit method   for linear and semilinear parabolic equations. Various numerical experiments  are carried out to verify the correct convergence rates of the a posteriori quantities, and the high efficiency of the adaptive algorithm.
	\end{abstract}
	
	\begin{keywords}
		Parabolic  equation, variable step-size two-step exponential Adams method,  two-step exponential Adams reconstruction,  a posteriori error estimates, adaptivity
	\end{keywords}
	
	\begin{MSCcodes}
		65M15, 65M50, 65L05, 65L70
	\end{MSCcodes}
	
	\section{Introduction}
	This paper is devoted to establishing  optimal order a posteriori error estimates for the two-step exponential Adams (E-Adams2) method for the following semilinear  parabolic equation
	\begin{equation}\label{nonlinear problem}\left\{
		\begin{array}{l}
			u'(t)+Au(t)=B(t,u(t)),\quad 0\leq t\leq T,\cr\noalign{\vskip1truemm} u(0)=u^0,
		\end{array}
		\right.\end{equation}
	{where}  $A: D(A)\rightarrow H$ is a positive definite, self-adjoint and  linear operator  on a Hilbert space $( H,(\cdot,\cdot))$ with domain $D(A)$ dense in $H$, $B(t, \cdot) : D(A)\rightarrow H, \ t\in [0,T]$, and the initial value $u^0\in H$. We consider the solution $u\in L^2(0,T;D(A))$ satisfying the equation  \cite{Hochbruck2010}.
	
	A posteriori error analysis plays an essential role in mesh adaptivity techniques, and it provides the computable error bounds of the difference between exact and numerical solutions to guide the adaptive meshes.  There have been extensive studies and  applications of a
	posteriori error estimates for numerical methods for elliptic problems \cite{Babusak1978,Bank1985,Ainsworth1993,Verfurth1994}.  Over the past decades, a posteriori error  theory of numerical methods for parabolic  problems has attracted considerable attention.  In \cite{Eriksson1991, Eriksson1995},  a posteriori error control for   discontinuous  Galerkin (DG)  methods  applied to such problems was established via  the   duality technique.   Notably, a posteriori error analysis  for DG time-stepping methods  of parabolic problems was further derived through an equilibrated flux approach; see \cite{Ern2015,Ern2017,Ern2019}.  Based on the \emph{reconstruction} technique, a posteriori error analysis has been systematically developed  for  classical (non-exponential) temporal discretization schemes,  such as the Crank--Nicolson (CN) method \cite{Akrivis2006,Lozinski2009,Wang2018,Wang2020,Wang2022b},  DG and Runge--Kutta--Radau methods \cite{Makri2006,Akrivis2020,Georgoulis2021},  continuous Galerkin   and Runge--Kutta collocation methods \cite{Akrivis2009,Akrivis2011},  the two-step backward differentiation formula method \cite{Akrivis2010,Wang2021,Wang2022a}.
	
	It is well known that exponential integrators permit larger temporal step-sizes and exhibit good stability for parabolic problems or
	stiff ordinary differential equations. Numerous efficient  exponential algorithms  have been proposed and analyzed  in the past decades. Two typical ones are  the integrating factor (IF) method \cite{Lawson1967,Krogstad2005,Mei2017,Isherwood2018,Ju2021}, and the exponential time differencing (ETD) method
	\cite{Cox2002,Berland2005,Hochbruck2005a,Hochbruck2005b,Hochbruck2010,Hochbruck2011,Li2016,Du2019,Wang2019,Li2020,Du2021,Fang2021,Huang2023,Wang2023}.
	The IF method uses a transformation of variables of the original equation.  The  ETD method starts  with the variation-of-constants formula, using interpolation polynomials to approximate the nonlinear term in the integral, and then  integrates the resulting approximation. For more details about the ETD method, we refer the reader to \cite{Hochbruck2010,Du2021}. An important feature of exponential integrators is that their implementations require computing the products of matrix exponentials and vectors, and the products are effectively evaluated by the Krylov subspace method \cite{Hochbruck1997,Higham2010,Gaudreault2018}, or  the Pad\'{e} approximation \cite{Moler2003,Berland2007,Higham2008}. Adaptive Krylov subspace methods have been  successfully  applied to  exponential integrators (see, e.g., \cite{Niesen2012,Gaudreault2018,Bergermann2024}). However, the related error estimates may only be applied to the Krylov subspace method.
	
	To the best of our knowledge,  the results in  \cite{Hu2024,Hu2025} provide the  first rigorous a  posteriori error analysis  for the (single-step) exponential midpoint method for parabolic equations, based on piecewise linear interpolation and  its quadratic reconstruction.  Additionally, an adaptive IF midpoint method for second order  evolution equations was formulated in \cite{Hu2026}, utilizing  reliable a posteriori error control. Exponential multistep methods of Adams-type were proposed and studied in \cite{Norsett1969,Calvo2006,Ostermann2006,Hochbruck2011,Li2020}.   Hochbruck and Ostermann \cite{Hochbruck2011} proved that the $s$-step exponential Adams method converges with  order $s$, and the Dahlquist stability properties  of  $s$-step (explicit) exponential Adams methods were analyzed in \cite{Coudiere2018}. This paper is concerned with the explicit E-Adams2 method, establishes optimal order  residual-based a posteriori error estimates for parabolic problems, and develops an adaptive algorithm. It should be pointed out that error bounds presented in this article depend  only on the discretization solutions  and the data of the problem, and thus are computable.
	
	The rest of this paper is organized as follows. Section \ref{sec2} is devoted to the formulation and assumptions of the E-Adams2 method.  Section \ref{sec3} introduces the E-Adams2 approximation $U$  defined by  a piecewise continuous linear interpolation at the  nodal values, which yields  suboptimal error estimates for linear problems;  to recover the optimal order, we present an E-Adams2 reconstruction $\hat{U}$ of $U$ with the second order residual, and   derive  optimal order residual-based   a  posteriori error estimates for the  method. Furthermore,  we extend the  theoretical results  to semilinear problems  in Section \ref{sec4}. An adaptive algorithm is developed based on effective error control in Section \ref{sec5}. Several  numerical examples in  Section \ref{sec6}  are carried out to further illustrate the effectiveness of the error estimators and the adaptive algorithm. Finally,  we make some concluding remarks in Section \ref{sec7}.
	
	\section{The two-step exponential Adams method for parabolic problems}\label{sec2} In this section, we present the two-step exponential Adams (E-Adams2) method and make some assumptions.
	\subsection{E-Adams2 method}  Let us consider  a   partition $0=t^{0}<t^{1}<\ldots <t^{N}=T$ of $[0,T]$,  $N\geq2$. For each $n=1,2,\ldots,N$, we denote  $k_n:=t^{n}-t^{n-1}$, and  $J_{n}:=(t^{n-1},t^{n}]$.  By the  variation-of-constants formula (see, for example, \cite{Hochbruck2005a,Hochbruck2010}), the exact solution  $u(t)$ of \eqref{nonlinear problem} on the interval $J_{n}$ satisfies the following integral equation
	\begin{equation}\label{variation of constants}
		\displaystyle u(t^{n})=e^{-k_{n}A}u(t^{n-1})+k_{n}\int_{0}^{1} e^{-(1-\tau)k_{n}A}B\big(t^{n-1}+k_{n}\tau,u(t^{n-1}+k_{n}\tau)\big)d\tau.
	\end{equation}
	The E-Adams2 method can be stated as follows: Given nodal approximations $U^{j}\approx u(t^j)$, we replace the nonlinear term $B(\tau,u(\tau))$ in \eqref{variation of constants} with the interpolation polynomial  $P_1$  through  the points $
	\big(t^{n-2},B(t^{n-2},U^{n-2})\big)$ and $ \big(t^{n-1},B(t^{n-1},U^{n-1})\big)$. This gives the scheme
	\begin{equation}\label{approximation nonlinear}
		U^{n}=e^{-k_{n}A}U^{n-1}+k_{n}\int_{0}^{1} e^{-(1-\tau)k_{n}A} P_{1}(t^{n-1}+k_{n}\tau)d\tau,  \quad n\geq2,
	\end{equation}
	where  $P_1$ is  given by
	\begin{equation}
		P_{1}(t^{n-1}+k_{n}\tau)=B^{n-1}+\frac{k_{n}\tau}{k_{n-1}}(B^{n-1}-B^{n-2}),
	\end{equation}
	with $B^{n-1}:=B(t^{n-1},U^{n-1})$. Computing the integrals then results in the E-Adams2  method
	\begin{equation}\label{nonlinear method}
		\bar{U}^{n}=e^{-k_{n}A}U^{n-1}+k_{n}\varphi_1(-k_{n}A)B^{n-1}+k_{n}r_{n}\varphi_2(-k_{n}A)\big(B^{n-1}-B^{n-2}\big),
	\end{equation}
	with $U^0:=u_0$ and $r_{n}=k_{n}/k_{n-1}$, $n=2,\ldots,N$,  where   the  $\varphi$-functions are  defined by
	\begin{equation}\label{phifunction}
		\varphi_{j}(z)=\int_{0}^{1} e^{(1-\tau)z}\frac{\tau^{j-1}}{(j-1)!}d\tau, \quad j\geq1.
	\end{equation}
	For $j\geq1$, these functions satisfy  the  recurrence relation
	\begin{equation}\label{recurrence}
		\varphi_{j+1}(z)=\frac{\varphi_j(z)-\varphi_j(0)}{z},  \quad  \varphi_j(0)=1/j!, \quad  \varphi_0(z)=e^z,
	\end{equation}
	and  have the power series expression
	\begin{equation*}
		\varphi_{j+1}(z)=\sum\limits_{l=0}^{\infty} \frac{z^l}{(l+j+1)!}.
	\end{equation*}
	
	\begin{remark} 	Unlike the single-step exponential midpoint method in \cite{Hu2024} (which involves only $\varphi_1(-k_nA)$) and the integrating factor midpoint method in \cite{Hu2026}, the present two-step scheme \eqref{nonlinear method} contains an additional $\varphi_2$-term. From \eqref{approximation nonlinear}, it is clear that the E-Adams2 method makes use of the properties of the exponential and
		the  $\varphi$-functions, which can exactly integrate  the linear part of \eqref{nonlinear problem}.  For stiff and highly oscillatory problems, the exponential contains the full information of the  linear operator $A$, and  exponential Adams methods integrate such problems explicitly. 
	\end{remark}

 \subsection{Assumption and preliminaries} Throughout the paper,  the a posteriori error analysis is  based on the following assumption for the linear operator $A$;  see \cite{Henry1981,Hochbruck2010}.
\begin{assumption}\label{assumption2.1}
Let $A:D(A)\rightarrow H$ be a sectorial operator on the Hilbert  space $H$ with the norm $\|\cdot\|_{H}$, i.e., there exist constants $M\geq1$, $a\in \mathbb{R}$, and $0 < \vartheta < \pi/2$ such that the resolvent estimate holds on the sector $\{ \lambda_{1} \in \mathbb{C} :  \vartheta \leq |\arg(\lambda_{1}-a)|\leq \pi,  \ \lambda_1 \neq a \}$,
 \begin{equation*}
  \|(\lambda_{1}I-A)^{-1}\|_{H\leftarrow H} \leq \frac{M}{|\lambda_{1}-a|},
 \end{equation*}
where the notation  $\|\cdot\|_{H\leftarrow H}$ denotes the operator norm on  $H$.
\end{assumption}

Obviously,  the Laplacian  operator $A=-\Delta$ defined in  a bounded domain $\Omega\subset \mathbb{R}^{d}$ satisfies Assumption \ref{assumption2.1}. Under this assumption, the  operator  $-A$ generates  an  analytic semigroup $\{e^{-tA}\}_{t\geq0}$ on $H$. By the sectoriality of
$A$,   the semigroup $\{e^{-tA}\}_{t\geq0}$  is  uniformly bounded on $ 0 \leq t \leq T$,
\begin{equation*}
\|e^{-tA}\|_{H\leftarrow H} +\| t^{\gamma}A^{\gamma}e^{-tA}\|_{H\leftarrow H} \leq C,
\end{equation*}
with constants $C$ and $\gamma$. This further leads to the uniformly bounded operators  $\varphi_{j}(-tA)$ defined on $H$.

 \section{A posteriori error estimates for linear problems} \label{sec3}

We first consider the E-Adams2 method with order two for  linear parabolic differential  equations,
\begin{equation}\label{linear problem}\left\{
\begin{array}{l}
u'(t)+Au(t)=f(t),\quad 0\leq t \leq T,\cr\noalign{\vskip2truemm} u(0)=u^0,
\end{array}
\right.\end{equation}
with $f:[0,T]\rightarrow H$.  For given  $\{v_{n}\}_{n=0}^N$, we use  the following notation
     \begin{equation*}
       \bar{\partial}v^{n}:= \frac{v^{n}-v^{n-1}}{k_{n}}, \quad v^{n-\frac{1}{2}}:=\frac{v^{n}+v^{n-1}}{2}, \quad n=1,\ldots,N.
     \end{equation*}
For linear problems,  the trapezoidal method  is employed to compute the approximation   $U^1\in D(A)$  of the value $u(t^{1})$,   i.e.,
\begin{equation}\label{trapezoidal}
	\bar{\partial}U^{1}+AU^{\frac{1}{2}}=f^{\frac{1}{2}},
\end{equation}
with  $U^{0} \in D(A)$ approximating $u^0$.  Note that even for $U^0 \in H$, the approximation  $U^{1}$ belongs to  $D(A)$  due to the regularization property of the numerical scheme \eqref{trapezoidal}. E-Adams2 nodal approximations  $U^{m}\in D(A)$ to the values $u(t^{m})$ are
    \begin{equation}\label{linear method}
    \begin{aligned}
        U^{n}&=e^{-k_{n}A}U^{n-1}+k_{n}\varphi_1(-k_{n}A)f^{n-1}+k_{n}r_{n}\varphi_2(-k_{n}A)\big(f^{n-1}-f^{n-2}\big) \cr\noalign{\vskip1truemm}
        &=U^{n-1}+k_{n}\varphi_1(-k_{n}A)\big(f^{n-1}-AU^{n-1}\big)+k_{n}r_{n}\varphi_2(-k_{n}A)\big(f^{n-1}-f^{n-2}\big),
    \end{aligned}
    \end{equation}
    with $f^{n-1}:=f(t^{n-1})$, $n=2,\ldots,N$.  By using these notation, the  method \eqref{linear method} can be  rewritten  as
     \begin{equation*}
        \bar{\partial} U^{n}=\varphi_1(-k_{n}A)\big(f^{n-1}-AU^{n-1}\big)+r_{n}\varphi_2(-k_{n}A)\big(f^{n-1}-f^{n-2}\big), \quad n=2,\ldots,N.
    \end{equation*}
     \begin{remark}[The computation of $U^1$]
    	Obviously, it is more straightforward and convenient to compute $U^1$ with an exponential method than the trapezoidal method when the E-Adams2  method is used to approximate $U^n$, $n\geq 2$. However, we note that the order of  exponential midpoint method  will be reduced in
    	certain norms, particularly in the $L^2(0,T;V)$ norm (see, e.g., \cite{Hochbruck2005b,Hu2024}), which subsequently affects the convergence rate
    	of the E-Adams2 method. Since the  research on a posteriori error estimates for other  exponential-type integrators remains notably underdeveloped and the optimal a posteriori error estimates for the trapezoidal or CN method for parabolic equations have been established in \cite{Akrivis2006}, in this work, we use the trapezoidal method to obtain $U^1$.
    \end{remark}
    
\subsection{E-Adams2 linear approximation and suboptimal order a posteriori error estimate}
Since the error  $u(t^n)-U^{n}$ is of second order, it is natural to introduce the continuous approximation $U:[0,T]\rightarrow D(A)$ to $u$ defined by the piecewise linear interpolant at the nodal values $U^{n-1}$ and $U^{n}$,
\begin{equation}\label{linear problem linear interpolant}
	U(t)=U^{n-1}+(t-t^{n-1})\bar{\partial} U^{n}, \quad t\in J_n,\quad  n=1,\ldots,N.
\end{equation}
The \emph{residual} $R(t) \in H$ of $U$ is defined by
\begin{equation*}
	R(t):=U^{\prime}(t)+AU(t)-f(t), \quad t\in J_n,  \quad  n=1,\ldots,N,
\end{equation*}
which can be viewed as the amount by which the approximation solution $U$  misses satisfying \eqref{linear problem}.  From \eqref{linear problem linear interpolant}, the residual $R(t)$ takes the form
\begin{equation}\label{residual1}
	\begin{aligned}
		R(t)
		&=(\varphi_1(-k_{n}A)-I)\big(f^{n-1}-AU^{n-1}\big)+r_{n}\varphi_2(-k_{n}A)\big(f^{n-1}-f^{n-2}\big)
		\cr\noalign{\vskip1.5truemm}
		&\quad +(t-t^{n-1})A\bar{\partial} U^{n}+f^{n-1}-f(t), \quad t\in J_n, \quad n\geq2.
	\end{aligned}
\end{equation}
Suppose that  $U^1$ is computed by  the trapezoidal method (a variant of the CN method; see \cite{Akrivis2006,Akrivis2010}),  then  the residual  $R(t)\in H$ of $U$ is
\begin{equation*}
	R(t)=(t-t^{\frac{1}{2}})A\bar{\partial}U^{1}+f^{\frac{1}{2}}-f(t),  \quad t \in J_1.
\end{equation*}
Akrivis et al. \cite{Akrivis2006} have presented the first order residual $R(t)$ and introduced a  piecewise quadratic  reconstruction $\hat{U}$ of $U$ to recover the optimal order.

Under Assumption $2.1$, we define the following graph norm (see, e.g., \cite{Pazy1983,Hochbruck2010})
\begin{equation*}
	\|x\|_{D(A)}:=\|Ax\|_{H}+\|x\|_{H}.
\end{equation*}
According to the  recurrence relation \eqref{recurrence}, the following estimate holds for  a  constant $M_{0}$,
\begin{equation*}
	\begin{aligned}
		\|\varphi_1(-kA)-I\|_{H\leftarrow D(A)}:&=\sup_{\substack{x \in D(A) \\ x \neq 0}} \frac{\|(\varphi_1(-kA)-I)x\|_H}{\|x\|_{D(A)}}
		\leq \sup_{\substack{x \in D(A) \\ x \neq 0}}  \frac{M_{0}}{2}k\frac{\|Ax\|_{H}}{\|x\|_{D(A)}},
	\end{aligned}
\end{equation*}
which implies that  \eqref{residual1} has first order accuracy. We denote the
error by $e:=u-U$. Then the error $e$ satisfies the error equation $e^{\prime}+Ae=-R$. An application of  the energy technique to the error equation yields  suboptimal error bounds.
\subsection{Optimal order a  posteriori error estimates for linear problems}
To recover the optimal order,  this subsection  introduces a reconstruction $\hat{U}$ of $U$ that differs from the single-step case (see \cite{Hu2024,Hu2026}) by specifically handling the $\varphi_2$-term in \eqref{nonlinear method}, and then derives optimal order residual-based a posteriori error estimates for the E-Adams2 method for linear problems.

\subsubsection{E-Adams2  reconstruction}\label{sec3.1}
 Normally, the reconstruction $\hat{U}$ of $U$ shall possess the following  fundamental properties: (i) it should be continuous with $U$ at the nodes $t^{n}, n=0,\ldots,N$, and (ii) it provides the second order residual such that  residual-based a posteriori error estimates are of second (optimal) order, and (iii) the a posteriori error estimator derived from it is easy to compute.

To ensure that the reconstruction $\hat{U}$ of $U$ retains the fundamental properties mentioned above, based on the analysis for the approximation $U$ and its first order residual, let us first define  the  function $\psi(t): J_{n} \rightarrow H$,
\begin{equation*}
\begin{aligned}
\psi(t)&:=\varphi_1(-k_{n}A)f^{n-1}+r_{n}\varphi_2(-k_{n}A)(f^{n-1}-f^{n-2})+(t-t^{n-\frac{1}{2}})\varphi_1(-k_{n}A)\bar{\partial}f^{n-1},
\end{aligned}
\end{equation*}
for $t \in J_{n}$, $n\geq2$, and denote the piecewise quadratic polynomials $\Psi$  by $\Psi(t):=\int_{t^{n-1}}^{t}\psi(s)ds$. A simple calculation gives
\begin{equation*}
	\begin{aligned}
\Psi(t)&= (t-t^{n-1})\Big[\varphi_1(-k_{n}A)f^{n-1}+r_{n}\varphi_2(-k_{n}A)(f^{n-1}-f^{n-2})\Big]\cr\noalign{\vskip1.5truemm}
&\quad +\frac{1}{2}(t-t^{n-1})(t-t^{n})\varphi_1(-k_{n}A)\bar{\partial}f^{n-1}, \quad t\in J_{n}, \quad n\geq2.
\end{aligned}
\end{equation*}
 It is straightforward to show that $\Psi(t^{n-1})=0$, and
\begin{equation*}
 \quad \Psi(t^{n})=k_{n}\varphi_1(-k_{n}A)f^{n-1}+k_nr_{n}\varphi_2(-k_{n}A)(f^{n-1}-f^{n-2})=\int_{J_{n}} \psi(s)ds.
\end{equation*}
For any $t \in J_{n}$, $n\geq2$, we are  now  in a position to introduce the \emph{E-Adams2 reconstruction} $\hat{U}$ of $U$ by:
\begin{equation}\label{reconstruction}
\begin{aligned}
\hat{U}(t)&:=U^{n-1}-\int_{t^{n-1}}^{t} \varphi_1(-k_{n}A)AUds+\Psi(t)+\int_{t^{n-1}}^{t} \big(I-\varphi_1(-k_{n}A)\big)U^{\prime}(s)ds\cr\noalign{\vskip1.5truemm}
&=U^{n-1}-\int_{t^{n-1}}^{t} \varphi_1(-k_{n}A)AUds+\Psi(t) +(t-t^{n-1})\big(I-\varphi_1(-k_{n}A)\big)\bar{\partial}U^{n}.
\end{aligned}
\end{equation}
It should be noted that the term $k_{n}\varphi_1(-k_{n}A)AU^{n-1}$ appeared in \eqref{linear method}, approximating the integral in \eqref{reconstruction},  is of first order.  Based on this fact, we  evaluate the integral in \eqref{reconstruction}  by  the  first order approximation $(t-t^{n-1})\varphi_1(-k_{n}A)A\big(c_1U^{n-1}+c_2U^{n}\big)$ with the parameters  $c_1=-k_{n}^{-1}(\varphi_1(-k_{n}A)A)^{-1}\big[I-\varphi_1(-k_{n}A)-k_{n}\varphi_1(-k_{n}A)A\big]$, and
 $c_2=k_{n}^{-1}(\varphi_1(-k_{n}A)A)^{-1}\big[I-\varphi_1(-k_{n}A)\big]$; then  we obtain
 {\small{
\begin{equation}\label{reconstruction2}
\begin{aligned}
\hat{U}(t)=\ &U^{n-1}+\frac{t-t^{n-1}}{k_{n}}\Big[\big(I-\varphi_1(-k_{n}A)-k_{n}\varphi_1(-k_{n}A)A\big)U^{n-1}
-\big(I-\varphi_1(-k_{n}A)\big)U^{n}\Big]\cr\noalign{\vskip2truemm}
&+\Psi(t)+\frac{t-t^{n-1}}{k_{n}}\Big[I-\varphi_1(-k_{n}A)\Big](U^{n}-U^{n-1})\cr\noalign{\vskip1truemm}
=\ &U^{n-1}+(t-t^{n-1})\Big[\varphi_1(-k_{n}A)(f^{n-1}-AU^{n-1})+r_{n}\varphi_2(-k_{n}A)(f^{n-1}-f^{n-2})\Big]\cr\noalign{\vskip2truemm}
&+\frac{1}{2}(t-t^{n-1})(t-t^{n})\varphi_1(-k_{n}A)\bar{\partial}f^{n-1},
\end{aligned}
\end{equation}}}
for any $t \in J_{n}$, $n\geq2$.
From \eqref{reconstruction2}, it is simple to verify that $\hat{U}(t^{n-1})=U^{n-1}$, and
\begin{equation*}
\hat{U}(t^{n})=U^{n-1}+k_{n}\varphi_1(-k_{n}A)\big(f^{n-1}-AU^{n-1}\big)+k_{n}r_{n}\varphi_2(-k_{n}A)\big(f^{n-1}-f^{n-2}\big)=U^{n}.
\end{equation*}
As a consequence, $\hat{U}$ coincides with $U$ at the nodes $t^{1},\ldots, t^{n}$. Since  we take the following  reconstruction $\hat{U}$ of $U$ over the interval $J_1$, $\hat{U}$ coincides with $U$ at the nodes $t^{0}$ and $t^{1}$; in particular,  $\hat{U}:[0,T]\rightarrow H$ is continuous.

Once $U^{1}$ is computed by the trapezoidal method, a piecewise quadratic reconstruction $\hat{U}$, introduced by \cite{Akrivis2006}, can be defined as 
\begin{equation*}
	\hat{U}(t)=U^0-\int_{t^0}^{t}AU(s)ds+\int_{t^0}^{t}\Big(f^{\frac{1}{2}}+(s-t^\frac{1}{2})\bar{\partial} f^1\Big)ds, \quad \forall t \in J_1.
\end{equation*}
It then follows that 
\begin{equation}\label{derivative of recon2}
	\hat{U}^{\prime}(t)+AU(t)=f^{\frac{1}{2}}+(t-t^{\frac{1}{2}})\bar{\partial}f^{1},   \quad \forall t\in J_1.
\end{equation}
The  residual $\hat{R}(t)\in H$ of $\hat{U}$ is defined by
\begin{equation*}
	\hat{R}(t):=\hat{U}^{\prime}(t)+A\hat{U}(t)-f(t), \quad  t \in J_{n}, \quad n=1,\ldots,N.
\end{equation*}
For $t\in J_{1}$,  the second order residual $\hat{R}(t)$ of $\hat{U}$ is given by
\begin{equation}\label{recon error2}
	\hat{R}(t)=A(\hat{U}(t)-U(t))+R_{f_{1}}(t),
\end{equation}
where
\begin{equation}\label{f1}
	R_{f_1}(t)=f^{\frac{1}{2}}+(t-t^{\frac{1}{2}})\bar{\partial}f^1-f(t), \quad  t \in J_1.
\end{equation}

From \eqref{reconstruction}, the time derivative of $\hat{U}$ satisfies
\begin{equation}\label{derivative of recon}
\begin{aligned}
\hat{U}^{\prime}(t)&+\varphi_1(-k_{n}A)AU(t)=\psi(t)+\big(I-\varphi_1(-k_{n}A)\big)U^{\prime}(t), \quad \forall t \in J_n, \quad n \geq 2.
\end{aligned}
\end{equation}
Therefore, for  $t \in J_n$, the residual $\hat{R}(t)$ can be written in the form
\begin{equation}\label{optimal residual}
\begin{aligned}
\hat{R}(t)=&A\hat{U}(t)-\varphi_1(-k_{n}A)AU(t)+\psi(t)+\big(I-\varphi_1(-k_{n}A)\big)U^{\prime}(t)-f(t)\cr\noalign{\vskip1.5truemm}
=&A(\hat{U}(t)-U(t))+(I-\varphi_1(-k_{n}A))\big(U^{\prime}(t)+AU(t)-f(t)\big)+\varphi_1(-k_{n}A)(f^{n-1}\cr\noalign{\vskip1.5truemm}
&+(t-t^{n-1})\bar{\partial}f^{n-1}-f(t))+r_{n}\Big[\varphi_2(-k_{n}A)-\frac{1}{2}\varphi_1(-k_{n}A)\Big]\big(f^{n-1}-f^{n-2}\big).
\end{aligned}
\end{equation}
 The second order convergence rate of the residual $\hat{R}$  is analyzed   by the following remarks.

 \begin{remark}[Estimate of  $\hat{U}-U$]
  Subtracting \eqref{linear problem linear interpolant} from \eqref{reconstruction2}, we get
\begin{equation*}
\begin{aligned}
\hat{U}(t)-U(t)=&(t-t^{n-1})\Big[\varphi_1(-k_{n}A)(f^{n-1}-AU^{n-1})+r_{n}\varphi_2(-k_{n}A)(f^{n-1}-f^{n-2})\cr\noalign{\vskip0.5truemm}
&+\frac{1}{2}(t-t^{n})\varphi_1(-k_{n}A)\bar{\partial}f^{n-1}-\bar{\partial}U^{n}\Big]\cr\noalign{\vskip0.5truemm}
=&\frac{1}{2}(t-t^{n-1})(t-t^{n})\varphi_1(-k_{n}A)\bar{\partial}f^{n-1},
\end{aligned}
\end{equation*}
for any $ t\in J_{n}$, $n \geq 2$, and it is easy to verify that
\begin{equation}\label{formula1}
\begin{aligned}
 \int_{J_{n}}\|\hat{U}(t)-U(t)\|^2dt&=\frac{1}{4}\int_{J_{n}}(t-t^{n-1})^2(t-t^{n})^2\|\varphi_1(-k_{n}A)\bar{\partial}f^{n-1}\|^2dt\cr\noalign{\vskip1.5truemm}
&=\frac{k_{n}^5}{120}\|\varphi_1(-k_{n}A)\bar{\partial}f^{n-1}\|^2, \quad \forall t\in J_{n}, \quad n \geq 2.
\end{aligned}
\end{equation}
In \cite{Akrivis2006}, the difference between $\hat{U}$ and $U$ was given by
\begin{equation*}
	\hat{U}(t)-U(t)=\frac{1}{2}(t-t^{0})(t-t^{1})\bar{\partial}(f^{1}-AU^{1}), \quad \forall t \in J_1.
\end{equation*}
A  direct calculation gives
\begin{equation*}
	\int_{J_1}\|\hat{U}(t)-U(t)\|^2dt=\frac{k_{1}^5}{120}\|\bar{\partial}(f^{1}-AU^{1})\|^2, \quad \forall t \in J_1.
\end{equation*}
 Therefore, the a posteriori quantity  $\epsilon_U:=\int_{0}^{T} \|\hat{U}(t)-U(t)\|^2dt$ is  computed  as
\begin{equation*}
	\begin{aligned}
		\epsilon_U&=\int_{J_1}\big\|\hat{U}(t)-U(t)\big\|^2dt+\sum_{n=2}^{N}  \int_{J_{n}}\big\|\hat{U}(t)-U(t)\big\|^2dt\cr\noalign{\vskip0.2truemm}
		&=\frac{k_{1}^5}{120}\big\|\bar{\partial}(f^{1}-AU^{1})\big\|^2+\sum_{n=2}^{N}\frac{k_{n}^5}{120}\big\|\varphi_1(-k_{n}A)\bar{\partial}f^{n-1}
		\big\|^2.
	\end{aligned}
\end{equation*}

\end{remark}
\begin{remark}[Optimality of the residual $\hat{R}$]
 By formula \eqref{formula1} and  the bounded  operators $\varphi_{j}(-kA)$,  the a posteriori quantities $\hat{U}(t)-U(t)$ and $\varphi_1(-k_{n}A)\big(f^{n-1}+(t-t^{n-1})\bar{\partial}f^{n-1}-f(t)\big)$ are of second order   when $f \in C^2(0,T;H)$. Using  the relations $\varphi_1(-k_{n}A)-I=\mathcal{O}(k_{n})$ and $\frac{1}{2}\varphi_1(-k_{n}A)-\varphi_2(-k_{n}A)=\mathcal{O}(k_{n})$, the  a posteriori quantities   $(\varphi_1(-k_{n}A)-I)R(t)$  and $r_{n}\big[\frac{1}{2}\varphi_1(-k_{n}A)-\varphi_2(-k_{n}A)\big](f^{n-1}-f^{n-2})$ are  of  second order. Therefore,  \eqref{optimal residual} is of optimal (second) order.
\end{remark}

\begin{remark}
Note that  the second order a posteriori quantities $\big(\varphi_1(-k_{n}A)-I\big)R(t)$ and   $r_{n}\big[\frac{1}{2}\varphi_1(-k_{n}A)-\varphi_2(-k_{n}A)\big](f^{n-1}-f^{n-2})$ are related to the relations $\varphi_1(-k_{n}A)-I=\mathcal{O}(k_{n})$ and $\frac{1}{2}\varphi_1(-k_{n}A)-\varphi_2(-k_{n}A)=\mathcal{O}(k_{n})$. The practical computation of these quantities may affect the  convergence rates  of the a posteriori estimators, particularly for larger temporal step-sizes.  The reduced-order behavior of the a posteriori quantities may occur in semilinear problems, and the convergence rates of all these quantities will be confirmed by numerical examples  in Section \ref{sec6}.
\end{remark}

\subsubsection{Optimal order a  posteriori error analysis}\label{sec3.2}
To begin with, we introduce some notations. Let $V:=D(A^{\frac{1}{2}})$, and denote
by $|\cdot|$, $\|\cdot\|$ the norms in $H$ and $V$, with $\|v\|:=|A^{\frac{1}{2}}v|=(Av,v)^{\frac{1}{2}}$  for any $v\in V$.  We denote the dual of $V$ by
$V^{\star}$ $(V\subset H\subset V^{\star})$ with the dual norm $\|v\|_{\star}:=|A^{-\frac{1}{2}}v|=(v,A^{-1}v)^{\frac{1}{2}}$  for any $v \in V^{\star}$. We still denote
by $(\cdot,\cdot)$ the dual pairing  between $V^{\star}$ and $V$;  by  $L^{p}(J;X)$, $1\leq p\leq \infty$,  the standard  Lebesgue spaces  on the time  interval $J$  and a  Banach space $X$ ($X=H$, $V$ or  $V^{\star}$)  with the  norm $\|\cdot\|_{L^{p}(J;X)}$.  We further introduce the following regularity condition for  $\hat{U}$ defined in \eqref{reconstruction}:
\begin{equation*}
  \hat{U} \in V, \quad \forall t \in [0,T].
\end{equation*}
 Let the errors $e$ and $\hat{e}$, $e:=u-U$ and $\hat{e}:=u-\hat{U}$; subtracting  \eqref{derivative of recon} from  \eqref{linear problem}, we get  the following error equation
\begin{equation}\label{error1}
\begin{aligned}
\displaystyle \hat{e}^{\prime}(t)+Ae(t)&=f(t)-\big(I-\varphi_1(-k_{n}A)\big)(U^{\prime}(t)+AU(t))-\psi(t)\cr\noalign{\vskip0.5truemm}
&=\big(\varphi_1(-k_{n}A)-I\big)R(t)+\varphi_1(-k_{n}A)R_{f}(t)\cr\noalign{\vskip0.5truemm}
&\quad+r_{n}\Big[\frac{1}{2}\varphi_1(-k_{n}A)-\varphi_2(-k_{n}A)\Big]\big(f^{n-1}-f^{n-2}\big), \quad t\in J_{n}, \quad n\geq2,
\end{aligned}
\end{equation}
  where $R(t)$ is the residual of $U$,  and $R_f(t)$ is  defined by
\begin{equation}\label{linear interpolation of f}
  R_{f}(t):=f(t)-f^{n-1}-(t-t^{n-1})\bar{\partial}f^{n-1},  \quad t \in J_n, \quad n\geq2.
\end{equation}
We take in \eqref{error1} the inner product with $\hat{e}(t)$ and arrive at
\begin{equation*}
\begin{aligned}
 \big(\hat{e}^{\prime}(t),\hat{e}(t)\big)+\big(Ae(t),\hat{e}(t)\big)
&=\Big((\varphi_1(-k_{n}A)-I)R(t),\hat{e}(t)\Big)+\big(\varphi_1(-k_{n}A)R_{f}(t),\hat{e}(t)\big)\\
&\ +\Big(r_{n}\Big[\frac{1}{2}\varphi_1(-k_{n}A)-\varphi_2(-k_{n}A)\Big]\big(f^{n-1}-f^{n-2}\big),\hat{e}(t)\Big).
\end{aligned}
\end{equation*}

Using the relations $
\big(Ae(t),\hat{e}(t)\big)=\frac{1}{2}\big(\|e(t)\|^2+\|\hat{e}(t)\|^2-\|\hat{e}(t)-e(t)\|^2 \big)$ and $\hat{e}(t)-e(t)=U(t)-\hat{U}(t)$,
we have
\begin{equation}\label{error2}
\begin{aligned}
 & \frac{d}{dt}|\hat{e}(t)|^2+\|e(t)\|^2+\|\hat{e}(t)\|^2\cr\noalign{\vskip0.5truemm}
\displaystyle &=\|\hat{U}(t)-U(t)\|^2+2\Big(\big(\varphi_1(-k_{n}A)-I\big)R(t),\hat{e}(t)\Big)+2\big(\varphi_1(-k_{n}A)R_{f}(t),\hat{e}(t)\big)\cr\noalign{\vskip0.5truemm}
\displaystyle  &\quad +2\Big(r_{n}\Big[\frac{1}{2}\varphi_1(-k_{n}A)-\varphi_2(-k_{n}A)\Big]\big(f^{n-1}-f^{n-2}\big),\hat{e}(t)\Big).
\end{aligned}
\end{equation}

{\bf $L^2(0,t;V)$-estimate.} The following theorem shows an $L^2(0,T;V)$-norm a posteriori error estimates for the method  \eqref{linear method} for linear problems.
\begin{theorem}\label{linear problem error estimates}
Let $U(t)$ be the continuous approximation defined in \eqref{linear problem linear interpolant}, and $\hat{U}(t)$ be the E-Adams2 reconstruction of
$U(t)$ defined by \eqref{reconstruction}. Denote the  errors  by  $e=u-U$ and $\hat{e}=u-\hat{U}$. Then the following a posteriori error bounds are valid for $t\in J_{n}$, $n\geq2$,and $\epsilon_1>3/2$:
\begin{equation}\label{linear error}
  \begin{aligned}
 &\frac{2\epsilon_1-3}{4\epsilon_1-3} \int_{0}^{t} \|\hat{U}(s)-U(s)\|^2ds \leq |\hat{e}(t)|^2+\int_{0}^{t} \Big(\|e(s)\|^2+\big(1-\frac{3}{2\epsilon_1}\big)\|\hat{e}(s)\|^2\Big)ds\\
  &  \leq |\hat{e}(0)|^2+\int_{0}^{t} \|\hat{U}(s)-U(s)\|^2ds
  +\frac{2\epsilon_1}{3}\int_{t^0}^{t^{1}} \|R_{f_1}(s)\|_{\star}^2ds\\
  &\quad +2\epsilon_1\sum_{i=2}^{n-1} \int_{t^{i-1}}^{t^{i}}   \bigg(\big\|(\varphi_1(-k_{i}A)-I)  R(s)\big\|_{\star}^2    +\big\|\varphi_1(-k_{i}A)R_f(s)\big\|_{\star}^2\\
 &\quad+\Big\|r_{i}\Big[\frac{1}{2}\varphi_1(-k_{i}A)-\varphi_2(-k_{i}A)\Big] \big(f^{i-1} -f^{i-2}\big)\Big\|_{\star}^2 \bigg)ds\\
    &\quad +2\epsilon_1\int_{t^{n-1}}^{t} \bigg( \big\|(\varphi_1(-k_{n}A)-I) R(s)\big\|_{\star}^2 +\big\|\varphi_1(-k_{n}A)R_f(s)\big\|_{\star}^2 \\
  &\quad +\Big\|r_{n}\Big[\frac{1}{2}\varphi_1(-k_{n}A)-\varphi_2(-k_{n}A)\Big]\big(f^{n-1}-f^{n-2}\big)\Big\|_{\star}^2 \bigg)ds,
  \end{aligned}
\end{equation}
 where $R(s)$, $R_{f_{1}}(s)$, and  $R_{f}(s)$  are defined by \eqref{residual1}, \eqref{f1}, and \eqref{linear interpolation of f}, respectively.
 \end{theorem}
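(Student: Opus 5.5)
The plan is to integrate the energy identity \eqref{error2} in time, after bounding its right-hand side with duality and Young's inequality. On $J_n$, $n\ge2$, \eqref{error2} already holds. On $J_1$ I first need its analogue. Subtracting \eqref{derivative of recon2} from \eqref{linear problem} gives $\hat e'(t)+Ae(t)=-R_{f_1}(t)$. Testing this with $\hat e$ and using the same polarization identity $2(Ae,\hat e)=\|e\|^2+\|\hat e\|^2-\|\hat U-U\|^2$ yields
\[
\frac{d}{dt}|\hat e(t)|^2+\|e(t)\|^2+\|\hat e(t)\|^2=\|\hat U(t)-U(t)\|^2-2\big(R_{f_1}(t),\hat e(t)\big),\qquad t\in J_1 .
\]

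\textbf{Upper bound.} Each inner product will be bounded by $|(v,w)|\le\|v\|_\star\|w\|$ followed by Young's inequality.

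On $J_n$, $n\ge2$, there are three terms, and I give each the weight $\frac{1}{2\epsilon_1}\|\hat e\|^2$. For example, $2\big((\varphi_1-I)R,\hat e\big)\le 2\epsilon_1\|(\varphi_1-I)R\|_\star^2+\frac{1}{2\epsilon_1}\|\hat e\|^2$, and the other two terms are handled the same way. Together they absorb $\frac{3}{2\epsilon_1}\|\hat e\|^2$ into the left-hand side, which leaves the coefficient $1-\frac{3}{2\epsilon_1}$. Each of the three residual terms then carries the factor $2\epsilon_1$, matching the statement.

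On $J_1$ there is only one term. I use $2(R_{f_1},\hat e)\le\frac{2\epsilon_1}{3}\|R_{f_1}\|_\star^2+\frac{3}{2\epsilon_1}\|\hat e\|^2$, so the absorbed amount is the same $\frac{3}{2\epsilon_1}\|\hat e\|^2$. This uniformity is what explains the constant $\frac{2\epsilon_1}{3}$ in the statement.

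Integrating from $0$ to $t$ over $J_1,\dots,J_{n-1}$ and $(t^{n-1},t]$ then gives the right-hand inequality of \eqref{linear error}. Integrating over $[0,t]$ is legitimate because $\hat e$ is continuous: $\hat U$ matches $U$ at the nodes, and $\hat U(t)\in V$ by the regularity condition, so the pairings are defined.

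\textbf{Lower bound.} By the triangle inequality in $V$, $\|\hat U-U\|=\|e-\hat e\|\le\|e\|+\|\hat e\|$. Hence for any $\alpha>0$,
\[
\|\hat U-U\|^2\le(1+\alpha)\|e\|^2+(1+\alpha^{-1})\|\hat e\|^2 .
\]
Set $\beta=1-\frac{3}{2\epsilon_1}$ and choose $\alpha$ so that the two coefficients are in the ratio $1:\beta$, i.e.\ $(1+\alpha^{-1})/(1+\alpha)=\beta^{-1}$, which gives $\alpha=\beta^{-1}$. Then $\|e\|^2+\beta\|\hat e\|^2\ge\frac{\beta}{1+\beta}\|\hat U-U\|^2$. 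Since $\frac{\beta}{1+\beta}=\frac{2\epsilon_1-3}{4\epsilon_1-3}$, integrating and dropping $|\hat e(t)|^2\ge0$ gives the left-hand inequality.

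\textbf{Main obstacle.} The delicate point is the bookkeeping of the Young weights, so that the first interval (one residual) and the later intervals (three residuals) produce exactly the constants $\frac{2\epsilon_1}{3}$ and $2\epsilon_1$ with a common absorbed coefficient $\frac{3}{2\epsilon_1}$. I also need to check that the $J_1$ error equation derived from \eqref{derivative of recon2} has precisely the form used above.
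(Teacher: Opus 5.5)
Your proposal is correct and follows the paper's proof essentially step for step: the same Young weights ($\frac{1}{2\epsilon_1}$ per residual term on $J_n$, and $\frac{3}{2\epsilon_1}$ paired with $\frac{2\epsilon_1}{3}$ on $J_1$), integration across $J_1$ and the later intervals, and the triangle-inequality/Young argument for the lower bound. Two small remarks: your sign $-2(R_{f_1},\hat e)$ on $J_1$ is the correct one (the paper's $+$ is a typo, immaterial after taking absolute values), and the ratio condition you display should read $(1+\alpha^{-1})/(1+\alpha)=\beta$, although your choice $\alpha=\beta^{-1}$ and the resulting bound are right.
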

\begin{proof}
  We apply the  Cauchy--Schwarz inequality and the Young's inequality to \eqref{error2}, and obtain
   \begin{equation}\label{error3}
   \begin{aligned}
     & \frac{d}{dt}|\hat{e}(t)|^2+\|e(t)\|^2+\big(1-\frac{3}{2\epsilon_1}\big)\|\hat{e}(t)\|^2 \\
      &\leq \|\hat{U}(t)-U(t)\|^2+2\epsilon_1\bigg(\big\|\big(\varphi_1(-k_{n}A)-I\big)R(t)\big\|_{\star}^2+\big\|\varphi_1(-k_{n}A)R_f(t)\big\|_{\star}^2\\
     &\quad +\Big\|r_{n}\Big[\frac{1}{2}\varphi_1(-k_{n}A)-\varphi_2(-k_{n}A)\Big]\big(f^{n-1}-f^{n-2}\big)\Big\|_{\star}^2\bigg),
  \end{aligned}
   \end{equation}
for $t\in J_n$, $n\geq 2$, and $\epsilon_1>3/2$. Integrating inequality \eqref{error3} from $t^1$ to $t\in J_n$, one has
   \begin{equation}\label{estimate1}
   \begin{aligned}
 &  |\hat{e}(t)|^2+\int_{t^1}^{t} \Big(\|e(s)\|^2+\big(1-\frac{3}{2\epsilon_1}\big)\|\hat{e}(s)\|^2\Big)ds\\
  & \leq |\hat{e}(t^1)|^2+ \int_{t^1}^{t} \|\hat{U}(s)-U(s)\|^2ds +2\epsilon_1\sum_{i=2}^{n-1} \int_{t^{i-1}}^{t^{i}}  \bigg( \big\|\big(\varphi_1(-k_{i}A)-I\big) R(s)\big\|_{\star}^2\\
 &\quad  +\|\varphi_1(-k_{i}A)R_f(s)\|_{\star}^2 + \Big\|r_{i}\Big[\frac{1}{2}\varphi_1(-k_{i}A)-\varphi_2(-k_{i}A)\Big]  \big(f^{i-1}-f^{i-2}\big)\Big\|_{\star}^2  \bigg)ds \\
 & \quad +2\epsilon_1 \int_{t^{n-1}}^{t} \bigg(\big\|\big(\varphi_1(-k_{n}A)-I\big) R(s) \big \|_{\star}^2+\big\|\varphi_1(-k_{n}A)R_f(s)\big\|_{\star}^2\\
 &\quad  +\Big\|r_{n}	\Big[\frac{1}{2}\varphi_1(-k_{n}A)-\varphi_2(-k_{n}A)\Big] \big(f^{n-1}-f^{n-2}\big)\Big\|_{\star}^2\bigg) ds.
\end{aligned}
   \end{equation}
  Now that  $U^{1}$ is computed  by  the trapezoidal method, it follows that
\begin{equation*}
	\begin{aligned}
		\frac{d}{dt}|&\hat{e}(t)|^2+\|e(t)\|^2+\|\hat{e}(t)\|^2
		=\|\hat{U}(t)-U(t)\|^2+2\big(R_{f_1}(t),\hat{e}(t)\big), \quad t \in J_1.
	\end{aligned}
\end{equation*}
Using a similar argument for  deriving \eqref{estimate1}, the following a posteriori error estimate holds   for  the errors $e$ and $\hat{e}$ over the interval $J_1$:
\begin{equation}\label{estimate2}
	\begin{aligned}
		|\hat{e}(t^1)|^2&+\int_{t^0}^{t^1} \Big(\|e(s)\|^2+\big(1-\frac{3}{2\epsilon_1}\big)\|\hat{e}(s)\|^2\Big)ds \\
		&\leq  |\hat{e}(0)|^2+\int_{t^0}^{t^1} \|\hat{U}(s)-U(s)\|^2ds+\frac{2\epsilon_1}{3}\int_{t^0}^{t^1} \|R_{f_1}(s)\|_{\star}^2ds.
	\end{aligned}
\end{equation}
Summing  \eqref{estimate1} and \eqref{estimate2},  we obtain the desired upper bound.

Combining the relation
\begin{equation*}
\|\hat{U}(t)-U(t)\|\leq\|e(t)\|+\|\hat{e}(t)\|,
\end{equation*} and the Young's inequality, we infer that
   \begin{equation}\label{lower error}
     \frac{2\epsilon_1-3}{4\epsilon_1-3}\|\hat{U}(t)-U(t)\|^2\leq\|e(t)\|^2+\big(1-\frac{3}{2\epsilon_1}\big)\|\hat{e}(t)\|^2;
   \end{equation}
   integrating inequality \eqref{lower error} from $0$ to $t$, the lower bound can be easily  obtained, and thus completes the proof
   of Theorem \ref{linear problem error estimates}.
\end{proof}

{\bf $L^{\infty}(0,t;H)$-estimate.} In view of \eqref{linear error},  the following a posteriori error estimate in the
$L^{\infty}(0,t;H)$-norm is valid for $t\in J_{n}$, $n\geq2$, and $\epsilon_1>3/2$:
\begin{equation}\label{linear error2}
  \begin{aligned}
 \max_{0\leq s \leq t}   |\hat{e}(s)|^2
&  \leq|\hat{e}(0)|^2+\int_{0}^{t} \|\hat{U}(s)-U(s)\|^2ds +\frac{2\epsilon_1}{3}\int_{t^0}^{t^{1}} \|R_{f_1}(s)\|_{\star}^2ds\\
&\quad +2\epsilon_1\sum_{i=2}^{n-1} \int_{t^{i-1}}^{t^{i}}   \bigg(\big\|\big(\varphi_1(-k_{i}A)-I\big)R(s)\big\|_{\star}^2 
  +\|\varphi_1(-k_{i}A)R_f(s)\|_{\star}^2 \\
& \quad + \Big\|r_{i}\Big[\frac{1}{2}\varphi_1(-k_{i}A)-\varphi_2(-k_{i}A)\Big] \big(f^{i-1}-f^{i-2}\big)\Big\|_{\star}^2 \bigg)ds\\
  &\quad +2\epsilon_1\int_{t^{n-1}}^{t} \bigg( \big\|\big(\varphi_1(-k_{n}A)-I\big) R(s)\big\|_{\star}^2 +\big\|\varphi_1(-k_{n}A)R_f(s)\big\|_{\star}^2  \\
  &\quad +\Big\|r_{n}\Big[\frac{1}{2}\varphi_1(-k_{n}A)-\varphi_2(-k_{n}A)\Big]\big(f^{n-1}-f^{n-2}\big)\Big\|_{\star}^2 \bigg)ds.
  \end{aligned}
\end{equation}
As shown in  Section \ref{sec3},  the reconstruction $\hat{U}$ coincides with $U$ at  the nodes $t^{0},\ldots, t^{N}$. Hence,  we have the following error bound:
\begin{equation}\label{discrete estimates for linear}
  \begin{aligned}
\max_{0\leq n \leq  N}  | e(t^{n})|^2
 & \leq |e(0)|^2+\int_{0}^{t^N} \|\hat{U}(t)-U(t)\|^2dt +\frac{2\epsilon_1}{3}\int_{t^0}^{t^{1}} \|R_{f_1}(s)\|_{\star}^2ds \\
&\quad +2\epsilon_1\sum_{n=2}^{N} \int_{t^{n-1}}^{t^{n}}   \bigg( \big\|\big(\varphi_1(-k_{n}A)-I\big)R(t)\big\|_{\star}^2+\|\varphi_1(-k_{n}A)R_f(t)\|_{\star}^2
  \\
 & \quad  + \Big \|r_{n}\big[\frac{1}{2}\varphi_1(-k_{n}A)-\varphi_2(-k_{n}A)\big] \big(f^{n-1}-f^{n-2}\big)\Big \|_{\star}^2 \bigg)dt.
  \end{aligned}
\end{equation}

\begin{remark}
 Theorem \ref{linear problem error estimates}  and formula \eqref{linear error2} show optimal order  error bounds of the method \eqref{linear method} for linear problems in the $L^2(0,t;V)$- and $L^{\infty}(0,t;H)$-norms. Error bounds depend only on the data of the problem and numerical solutions; therefore they are computable.
\end{remark}
\begin{remark}[Choice of $\epsilon_1$] The same parameter $\epsilon_1>0$ appears in both the upper and lower bounds as a direct consequence of the unified application of the Young's inequality in the derivation. To ensure the rigorous validity of the error estimate,  the condition $\epsilon_1>3/2$ is required, which guarantees both $\frac{2\epsilon_1-3}{4\epsilon_1-3}>0$ and $1-\frac{3}{2\epsilon_1}>0$. However, for larger time step-sizes, the practical computation of the  $\varphi$-functions may slightly affect the
 convergence rates of all the a posteriori quantities. In particular, the ratio of the lower bound  to the  error $e(t)$  may exceed $1$ in various norms.
To avoid this,  the parameter $\epsilon_1$ can be set closer to $3/2$ in practice, such that the ratio does not exceed $1$.
\end{remark}

\section{A posteriori error estimates for semilinear problems}\label{sec4} In this section, we derive optimal order a posteriori error bounds for the method \eqref{nonlinear method} for a class of semilinear problems introduced in \cite[Section 6]{Akrivis2006}.

 Let us assume that $B(t,\cdot)$ is an operator from $V$ into $V^{\star}$, and satisfies the following
local one-sided Lipschitz condition:
\begin{equation}\label{one-sided Lip}
(B(t,v)-B(t,w), v-w)\leq \lambda \|v-w\|^2+\mu|v-w|^2, \quad \forall v,w \in T_{u},
\end{equation}
 where $T_{u}:= \{v\in V: \min_{t} \|u(t)-v\|\leq 1\}$, with   constants   $\lambda < 1$  and  $\mu$. Moreover, we assume that the following local Lipschitz condition for $B(t,\cdot)$ in a strip along the exact solution $u$ is valid:
\begin{equation}\label{Lip}
\|B(t,v)-B(t,w)\|_{\star} \leq L\|v-w\|, \quad \forall v,w \in T_{u},
\end{equation}
with a constant $L$.
\subsection{E-Adams2 method}\label{sec4.1}
Similar to the linear case, we compute  $U^1 \in D(A)$  by the trapezoidal method, i.e.,
 \begin{equation*}
 \bar{\partial}U^{1}+AU^{\frac{1}{2}}=B^{\frac{1}{2}},
 \end{equation*}
 with $U^{0}:=u^{0}\in D(A)$ approximating $u^0$, and E-Adams2 nodal approximations $U^{m}\in D(A)$ to the values  $u(t^{m})$ are
 \begin{equation*}
U^{n}=U^{n-1}+k_{n}\varphi_1(-k_{n}A)(B^{n-1}-AU^{n-1})+k_{n}r_{n}\varphi_2(-k_{n}A)\big(B^{n-1}-B^{n-2}\big),\quad n\geq2.
\end{equation*}
Let us define the continuous approximation $U:[0,T]\rightarrow D(A)$  to $u$ by the linear interpolant  of nodal values $U^{n-1}$ and $U^{n}$,
 \begin{equation}\label{semi linear interpolant}
 U(t)=U^{n-1}+(t-t^{n-1})\bar{\partial}U^{n}, \quad  t \in J_{n}, \quad  n=1,\ldots,N.
\end{equation}
The  residual   $R(t)\in H$ of $U$  is defined as  follows:
\begin{equation}\label{semi linear resi1}
\begin{aligned}
R(t):&=U^{\prime}(t)+AU(t)-B(t,U(t))\\
&=\big(\varphi_1(-k_{n}A)-I\big)(B^{n-1}-AU^{n-1})+r_{n}\varphi_2(-k_{n}A) \big(B^{n-1}-B^{n-2}\big)\\
&\quad+(t-t^{n-1})A\bar{\partial} U^{n}+B^{n-1}-B(t,U(t)),   \quad  t\in J_{n}, \quad  n\geq2.
\end{aligned}
\end{equation}
Due to the  relation  $\varphi_1(-k_{n}A)-I=\mathcal{O}(k_{n})$, it follows  that  \eqref{semi linear resi1} is  of first order, i.e., of suboptimal order.

    To establish error bounds that match the convergence order of the method \eqref{nonlinear method},  the following piecewise continuous quadratic reconstruction $\hat{U}$ is introduced. To begin with, let us define the functions $\bar{\psi}(t): J_{n} \rightarrow H$,
\begin{equation*}
\bar{\psi}(t):=\varphi_1(-k_{n}A)B^{n-1}+r_{n}\varphi_2(-k_{n}A)(B^{n-1}-B^{n-2})+(t-t^{n-\frac{1}{2}})\varphi_1(-k_{n}A)\bar{\partial}B^{n-1},
\end{equation*}
for $ t\in J_{n}$, $n\geq2.$
We recall that \eqref{reconstruction}, and let the  E-Adams2  reconstruction $\hat{U}$ of $U$ be:
\begin{equation}\label{semi reconstruction1}
\begin{aligned}
\hat{U}(t):&=U^{n-1}-\int_{t^{n-1}}^{t} \varphi_1(-k_{n}A)AU(s)ds+\int_{t^{n-1}}^{t} \bar{\psi}(s)ds \\
& \quad +\int_{t^{n-1}}^{t} \big(I-\varphi_1(-k_{n}A)\big)U^{\prime}(s)ds, \quad   \forall t \in J_{n}, \quad   n\geq2,
\end{aligned}
\end{equation}
i.e.,
\begin{equation*}
\begin{aligned}
\hat{U}(t)&=U^{n-1}+(t-t^{n-1})\Big[\varphi_1(-k_{n}A)(B^{n-1}-AU^{n-1})+r_{n}\varphi_2(-k_{n}A)(B^{n-1}\\
&\quad -B^{n-2})+\frac{1}{2}(t-t^{n})\varphi_1(-k_{n}A)\bar{\partial}B^{n-1}\Big],  \quad   \forall t \in J_{n}, \quad   n\geq2.
\end{aligned}
\end{equation*}
It is easy to show that: for any $t\in J_{n}$, $n\geq2$,
\begin{equation*}
\hat{U}(t)-U(t)=\frac{1}{2}(t-t^{n-1})(t-t^{n})\varphi_1(-k_{n}A)\bar{\partial}B^{n-1}.
\end{equation*}
In \cite{Akrivis2006}, a  piecewise quadratic reconstruction $\hat{U}$ was given by 
\begin{equation*}
\hat{U}(t)=U^{0}-\int_{t^0}^{t}AU(s)ds+\int_{t^0}^{t} \Big(B^{\frac{1}{2}}+\frac{2}{k_1}(s-t^{\frac{1}{2}})(B^{\frac{1}{2}}-B^{0})\Big)ds, \quad  \forall t \in J_1.
\end{equation*}

\subsection{A posteriori error estimates}\label{sec4.2}
Let us consider the errors $e$ and $\hat{e}$, $e:=u-U$ and $\hat{e}:=u-\hat{U}$, and assume that $\hat{U}(t)$, $U(t) \in T_{u}$ for all $ t \in [0,T]$.

Recalling that
\begin{equation*}\left\{
\begin{aligned}
&u^{\prime}(t)+Au(t)=B(t,u(t)),\\
&\hat{U}^{\prime}(t)+\varphi_1(-k_{n}A)AU(t)=\bar{\psi}(t)+(I-\varphi_1(-k_{n}A))U^{\prime}(t),
\end{aligned}
\right.\end{equation*}
for any $t \in J_{n}$, $n\geq2$, then we have
\begin{equation}\label{semi error equation}
\begin{aligned}
\hat{e}^{\prime}(t)+Ae(t)&=B(t,u(t))-B(t,U(t))+\big(\varphi_1(-k_{n}A)-I\big)R(t)+\varphi_1(-k_{n}A)R_{b}(t)\\
&\quad+r_{n}\Big[\frac{1}{2}\varphi_1(-k_{n}A)-\varphi_2(-k_{n}A)\Big]\big(B^{n-1}-B^{n-2}\big),
\end{aligned}
\end{equation}
where $R(t)$ is the residual of $U$, and  $R_b(t)$ is defined by
\begin{equation}\label{semi b}
R_b(t)=B(t,U(t))-B^{n-1}-(t-t^{n-1})\bar{\partial}B^{n-1},   \quad t \in J_{n}, \quad n \geq2.
\end{equation}
Taking in \eqref{semi error equation} the inner product with $\hat{e}(t)$, we obtain
\begin{equation}\label{semi error in1}
\begin{aligned}
\frac{d}{dt}|\hat{e}(t)|^2+\|e(t)\|^2+\|&\hat{e}(t)\|^2=\|\hat{U}(t)-U(t)\|^2+2\big(B(t,u(t))-B(t,U(t)),\hat{e}(t)\big) \\
& +2\Big(\big(\varphi_1(-k_{n}A)-I\big)R(t),\hat{e}(t)\Big)+2\big(\varphi_1(-k_{n}A)R_b(t),\hat{e}(t)\big)\\
&  +2\Big(r_{n}\Big[\frac{1}{2}\varphi_1(-k_{n}A)-\varphi_2(-k_{n}A)\Big]\big(B^{n-1}-B^{n-2}\big),\hat{e}(t)\Big).
\end{aligned}
\end{equation}

  Since $U^{1}$ is computed by the trapezoidal method, we deduce that
  \begin{equation*}
\begin{aligned}
\frac{d}{dt}|\hat{e}(t)|^2+\|e(t)\|^2+\|\hat{e}(t)\|^2=\ &\|\hat{U}(t)-U(t)\|^2+2\big(B(t,u(t))-B(t,U(t)),\hat{e}(t)\big)\\
&+2\big(R_{b_1}(t),\hat{e}(t)\big),  \quad   t\in J_1,
\end{aligned}
\end{equation*}
where
\begin{equation}\label{semi b_1}
R_{b_1}(t)=B(t,U(t))-B^{\frac{1}{2}}-\frac{2}{k_1}(t-t^{\frac{1}{2}})(B^{\frac{1}{2}}-B^0), \quad t\in J_1.
\end{equation}

 Now, we  derive optimal order a posteriori error estimates for the
 method  \eqref{nonlinear method} for a class of semilinear problems.

\begin{theorem}\label{semilinear results} Suppose that the nonlinear operator $B$ in \eqref{nonlinear problem} satisfies the conditions \eqref{one-sided Lip} and \eqref{Lip}, the continuous approximation $U(t)$ is defined by \eqref{semi linear interpolant}, and the E-Adams2 reconstruction $\hat{U}(t)$ of $U(t)$ is introduced by \eqref{semi reconstruction1}. Let the errors $e(t)$ and $\hat{e}(t)$ be defined by $e=u-U$ and $\hat{e}=u-\hat{U}$. Then the following  a  posteriori error estimate holds for $t\in J_{n}$, $n\geq2$, and  $0<\theta<(1-\lambda)/5$:
\begin{equation*}
\begin{aligned}
  &\frac{1-\lambda-5\theta}{4}\int_{0}^{t}e^{3\mu(t-s)}\|\hat{U}(s)-U(s)\|^2ds\\
&  \leq|\hat{e}(t)|^2+\big(1-\lambda-5\theta\big)\int_{0}^t e^{3\mu(t-s)}\big(\|e(s)\|^2+\|\hat{e}(s)\|^2\big)ds\\
 &\leq e^{3\mu k_1}|e(0)|^2+\big(e^{3\mu(t-t^1)}-1\big)|e(t^{1})|^2+ \int_{0}^{t} e^{3\mu(t-s)} \Big[ \big(1+\frac{L^2}{4\theta}\big) \|\hat{U}-U\|^2+2\mu|\hat{U} \\
&\ -U|^2\Big]ds  + \frac{1}{\theta}\int_{t^0}^{t^1}e^{3\mu(t^1-s)}  \|R_{b_1}\|_{\star}^2ds +\frac{1}{\theta}\sum_{i=2}^{n-1} \int_{t^{i-1}}^{t^{i}} e^{3\mu(t-s)}  \bigg(\big\|\big(\varphi_1(-k_{i}A)-I\big) R\big\|_{\star}^2  \\
&\ +\|\varphi_1(-k_{i}A)R_b\|_{\star}^2+\Big\|r_{i}\Big[\frac{1}{2}\varphi_1(-k_{i}A) -\varphi_2(-k_{i}A)\Big] \big(B^{i-1} -B^{i-2}\big)\Big\|_{\star}^2\bigg)ds \\
&\   +\frac{1}{\theta} \int_{t^{n-1}}^{t} e^{3\mu(t-s)}  \bigg(\big\|\big(\varphi_1(-k_{n}A)-I\big) R(s)\big\|_{\star}^2 +\|\varphi_1(-k_{n}A)R_b(s)\|_{\star}^2 \\
 &\  + \Big\|r_{n}\Big[\frac{1}{2}\varphi_1(-k_{n}A)-\varphi_2(-k_{n}A)\Big]\big(B^{n-1}-B^{n-2}\big)\Big\|_{\star}^2\bigg)ds,
\end{aligned}
\end{equation*}
where $R(s)$, $R_b(s)$, and $R_{b_1}(s)$  are defined by \eqref{semi linear resi1}, \eqref{semi b}, and \eqref{semi b_1}, respectively.
\end{theorem}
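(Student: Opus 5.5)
We begin from the energy identity \eqref{semi error in1}, valid on $J_n$ for $n\geq2$, and from its counterpart on $J_1$ with $R_{b_1}$. The only new ingredient compared with Theorem \ref{linear problem error estimates} is the nonlinear term $2(B(t,u)-B(t,U),\hat e)$. We split it as
\[
2\big(B(t,u)-B(t,\hat U),\hat e\big)+2\big(B(t,\hat U)-B(t,U),\hat e\big).
\]
Since $\hat U,U\in T_u$, the one-sided Lipschitz condition \eqref{one-sided Lip} bounds the first piece by $2\lambda\|\hat e\|^2+2\mu|\hat e|^2$. The local Lipschitz condition \eqref{Lip} bounds the second piece by $2L\|\hat U-U\|\,\|\hat e\|\le \theta\|\hat e\|^2+\frac{L^2}{\theta}\|\hat U-U\|^2$.

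The one-sided bound, however, produces $\|\hat e\|^2$ with coefficient $2\lambda$, while the left side has only one copy. To balance this, we use $\|\hat e\|^2\le 2\|e\|^2+2\|\hat U-U\|^2$ (or a symmetric splitting $\lambda(\|e\|^2+\|\hat e\|^2)$ plus a $\hat U-U$ remainder), and similarly $2\mu|\hat e|^2\le \ldots+2\mu|\hat U-U|^2$-type terms. The precise bookkeeping will be fixed so that the left side ends up as $(1-\lambda-5\theta)(\|e\|^2+\|\hat e\|^2)$, the zeroth-order term is $3\mu|\hat e|^2$, and the data terms are $(1+\frac{L^2}{4\theta})\|\hat U-U\|^2+2\mu|\hat U-U|^2$.

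The three residual inner products (with $(\varphi_1-I)R$, $\varphi_1R_b$, and the $r_n[\tfrac12\varphi_1-\varphi_2]$ term) are each bounded by $\theta\|\hat e\|^2+\frac1\theta\|\cdot\|_\star^2$, via $|(v,w)|\le\|v\|_\star\|w\|$. This is where the count of roughly five $\theta$'s comes from. The result is a differential inequality
\[
\frac{d}{dt}|\hat e|^2-3\mu|\hat e|^2+(1-\lambda-5\theta)\big(\|e\|^2+\|\hat e\|^2\big)\le \mathcal{E}(t),
\]
where $\mathcal{E}$ collects the a posteriori quantities.

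Next we multiply by $e^{-3\mu t}$ and integrate, which is a Gronwall step. We do this first over $J_1$ using $R_{b_1}$, which gives $|\hat e(t^1)|^2$ in terms of $e^{3\mu k_1}|e(0)|^2$ (recall $\hat U(0)=U(0)$). We then integrate from $t^1$ to $t$, and sum the contributions over $J_i$. The term $(e^{3\mu(t-t^1)}-1)|e(t^1)|^2$ should appear from bookkeeping when the $J_1$ estimate, weighted with $e^{3\mu(t^1-s)}$, is combined with the later intervals weighted with $e^{3\mu(t-s)}$. This uses $\hat e(t^1)=e(t^1)$: we write $e^{3\mu(t-t^1)}|e(t^1)|^2=|e(t^1)|^2+(e^{3\mu(t-t^1)}-1)|e(t^1)|^2$ and insert the $J_1$ bound only into the first part.

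The lower bound follows as in the linear case. From $\|\hat U-U\|\le\|e\|+\|\hat e\|$ we get $\frac14\|\hat U-U\|^2\le\frac12(\|e\|^2+\|\hat e\|^2)$, hence $\frac{1-\lambda-5\theta}{4}\|\hat U-U\|^2\le(1-\lambda-5\theta)(\|e\|^2+\|\hat e\|^2)$. We then multiply by the exponential weight and integrate, adding $|\hat e(t)|^2\ge0$.

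The main obstacle is the precise absorption of the one-sided Lipschitz term, namely obtaining exactly $1-\lambda-5\theta$, $3\mu$, $L^2/(4\theta)$ and $2\mu|\hat U-U|^2$. For the $\lambda$ term, I would first try applying \eqref{one-sided Lip} against $e$ and $\hat e$ symmetrically, writing $\hat e=\frac12(e+\hat e)+\frac12(\hat U-U)$, which makes the $\lambda$-coefficient land on $\frac12(\|e\|^2+\|\hat e\|^2)$ after doubling. The resulting $|e|^2$ is converted via $|e|^2\le2|\hat e|^2+2|\hat U-U|^2$, which explains $3\mu$ and $2\mu|\hat U-U|^2$. Exact constants may need adjusting, but the structure of the bound follows from this route.
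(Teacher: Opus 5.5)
Your outline follows the paper's proof. It uses the identity \eqref{semi error in1}, a bound of the form \eqref{inequ1} for the nonlinear pairing, $2(v,\hat e)\le\theta\|\hat e\|^2+\theta^{-1}\|v\|_\star^2$ for the three residual pairings, the weight $e^{3\mu(t-s)}$, the separate estimate \eqref{semilinear upper3} on $J_1$, and the same lower-bound argument.

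The step you leave open does close with exactly the stated constants, but not via your first options. For example, using $\|\hat e\|^2\le 2\|e\|^2+2\|\hat U-U\|^2$ on the surplus $\lambda\|\hat e\|^2$ puts $2\lambda$ in front of $\|e\|^2$ and would force $\lambda<1/2$. Instead use $2\hat e=e+\hat e+(U-\hat U)$ (your $\frac12(\hat U-U)$ has the wrong sign) and split
\[
\begin{aligned}
2\big(B(u)-B(U),\hat e\big)={}&\big(B(u)-B(U),e\big)+\big(B(u)-B(\hat U),\hat e\big)\\
&+\big(B(\hat U)-B(U),\hat e\big)+\big(B(u)-B(U),U-\hat U\big).
\end{aligned}
\]
The first two terms are bounded by \eqref{one-sided Lip}. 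The last two are bounded by \eqref{Lip} as $L\|\hat U-U\|\,\|\hat e\|$ and $L\|e\|\,\|\hat U-U\|$. The last one is missing from your sketch, and it is why $\|e\|^2$ also picks up a $2\theta$. Apply Young in the form $Lab\le 2\theta a^2+\frac{L^2}{8\theta}b^2$ to both Lipschitz terms, and use $\mu|e|^2\le 2\mu|\hat e|^2+2\mu|\hat U-U|^2$ (take $\mu\ge 0$, which is no loss). This gives exactly \eqref{inequ1}. The residual terms then leave $(1-\lambda-2\theta)\|e\|^2+(1-\lambda-5\theta)\|\hat e\|^2$ on the left, and $1-\lambda-3\theta$ on $J_1$.

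One caveat concerns your final bookkeeping, which is also what the paper's ``Consequently'' does. You split $e^{3\mu(t-t^1)}|e(t^1)|^2=|e(t^1)|^2+(e^{3\mu(t-t^1)}-1)|e(t^1)|^2$ and insert \eqref{semilinear upper3} into the first part. This leaves $(1-\lambda-3\theta)\int_{t^0}^{t^1}e^{3\mu(t^1-s)}(\|e\|^2+\|\hat e\|^2)\,ds$ on the left. The displayed middle term instead contains $(1-\lambda-5\theta)\int_{t^0}^{t^1}e^{3\mu(t-s)}(\|e\|^2+\|\hat e\|^2)\,ds$, which is larger as soon as $e^{3\mu(t-t^1)}>\frac{1-\lambda-3\theta}{1-\lambda-5\theta}$.

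So for $\mu>0$ this route, yours and the paper's, proves the statement only with the weight $e^{3\mu(t^1-s)}$ on the $J_1$ part of the left-hand side. If you want the weight $e^{3\mu(t-s)}$ on all of $[0,t]$, multiply \eqref{semilinear upper3} by $e^{3\mu(t-t^1)}$ before adding. This costs $e^{3\mu t}|e(0)|^2$ and the weight $e^{3\mu(t-s)}$ in the $R_{b_1}$ term, but removes the $(e^{3\mu(t-t^1)}-1)|e(t^1)|^2$ term. For $\mu=0$, as in \eqref{nonlinear estimate2}, the two versions coincide.
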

\begin{proof}
Using inequalities \eqref{one-sided Lip} and \eqref{Lip}, for any positive $\theta$, one has
\begin{equation}\label{inequ1}
\begin{aligned}
2\big(B(t,u(t))-B(t,U(t)),\hat{e}(t)\big)&\leq (\lambda+2\theta)\big(\|\hat{e}(t)\|^2+\|e(t)\|^2\big)+3\mu|\hat{e}(t)|^2 \\
&\quad +2\mu |\hat{U}(t)-U(t)|^2+\frac{L^2}{4\theta}\|\hat{U}(t)-U(t)\|^2.
\end{aligned}
\end{equation}
 Inserting \eqref{inequ1} into \eqref{semi error in1}, and using the  Cauchy--Schwarz inequality, we get
\begin{equation*}
\begin{aligned}
\displaystyle &\frac{d}{dt}|\hat{e}(t)|^2-3\mu|\hat{e}(t)|^2+\big(1-\lambda-5\theta \big)\big(\|e(t)\|^2+\|\hat{e}(t)\|^2\big)\\
&\leq \big(1+\frac{L^2}{4\theta}\big)\|\hat{U}(t)-U(t)\|^2+2\mu|\hat{U}(t)-U(t)|^2+\frac{1}{\theta} \bigg(\big\|\big(\varphi_1(-k_{n}A)-I\big)R(t)\big\|_{\star}^2
\\
&\quad +\|\varphi_1(-k_{n}A) R_b(t)\|_{\star}^2
+\Big\|r_{n}\Big[\frac{1}{2}\varphi_1(-k_{n}A)-\varphi_2(-k_{n}A)\Big](B^{n-1}-B^{n-2})\Big\|_{\star}^2\bigg),
\end{aligned}
\end{equation*}
for  $0<\theta<(1-\lambda)/5$. We integrate the above inequality from $t^1$ to $t\in J_{n}$, $n\geq2$,  and obtain
\begin{equation*}
\begin{aligned}
& |\hat{e}(t)|^2+\big(1-\lambda-5\theta\big)\int_{t^1}^t e^{3\mu(t-s)}\big(\|e(s)\|^2+\|\hat{e}(s)\|^2\big)ds\\
\displaystyle
&\leq  e^{3\mu(t-t^1)}|\hat{e}(t^{1})|^2 + \int_{t^1}^{t} e^{3\mu(t-s)}\Big[\big(1+\frac{L^2}{4\theta}\big)\|\hat{U}(s)-U(s)\|^2+2\mu|\hat{U}(s)-U(s)|^2 \Big]ds \\
 	&\quad +\frac{1}{\theta}\sum_{i=2}^{n-1} \int_{t^{i-1}}^{t^{i}} e^{3\mu(t-s)} \bigg( \big\|\big(\varphi_1(-k_{i}A)-I\big)R(s)\big\|_{\star}^2+\|\varphi_1(-k_{i}A)R_b(s)\|_{\star}^2\\
&\quad+\Big\|r_{i}\Big[\frac{1}{2}\varphi_1(-k_{i}A)-\varphi_2(-k_{i}A)\Big]\big(B^{i-1} -B^{i-2}\big)\Big\|_{\star}^2 \bigg)ds  \\
&\quad+\frac{1}{\theta} \int_{t^{n-1}}^{t}e^{3\mu(t-s)}
\bigg( \big\|\big(\varphi_1(-k_{n}A) -I\big)R(s)\big\|_{\star}^2+\|\varphi_1(-k_{n}A)R_b(s)\|_{\star}^2 
\end{aligned}
\end{equation*}
\begin{equation*}
	\begin{aligned}
 +\Big\|r_{n}\Big[\frac{1}{2}\varphi_1(-k_{n}A)-\varphi_2(-k_{n}A)\Big]\big(B^{n-1}-B^{n-2}\big)\Big\|_{\star}^2\bigg)ds.
\end{aligned}
\end{equation*}
Following  the similar argument for deriving  the above inequality,  the upper bound of  the trapezoidal method  over the interval $J_1$ can be  obtained by
\begin{equation}\label{semilinear upper3}
\begin{aligned}
& |\hat{e}(t^1)|^2+\big(1-\lambda-3\theta\big)\int_{t^0}^{t^1} e^{3\mu(t^1-s)}\big(\|e(s)\|^2+\|\hat{e}(s)\|^2\big)ds\\
&\leq e^{3\mu k_1}|\hat{e}(0)|^2+\int_{t^0}^{t^1} e^{3\mu(t^1-s)} \Big[\big(1+\frac{L^2}{4\theta}\big)\|\hat{U}-U\|^2+2\mu|\hat{U}-U|^2+\frac{1}{\theta}\|R_{b_1}(s)\|_{\star}^2 \Big]ds,
\end{aligned}
\end{equation}
where $R_{b_{1}}(s)$ is defined by \eqref{semi b_1}. Consequently, we derive an upper bound for the errors.

Using the relation $\|\hat{U}(s)-U(s)\|\leq \|e(s)\|+\|\hat{e}(s)\|$, it is simple to show that
\begin{equation}\label{semi lower bound}
\frac{1-\lambda-5\theta}{4}\|\hat{U}(s)-U(s)\|^2\leq \big(1-\lambda-5\theta\big)\big(\|e(s)\|^2+\|\hat{e}(s)\|^2\big);
\end{equation}
we integrate inequality \eqref{semi lower bound} from $0$ to $t$, and obtain the desired lower bound. This completes the proof of Theorem \ref{semilinear results}.
\end{proof}

\begin{remark} For a given nonlinearity $B(t,\cdot)$ and a strip $T_u$ along the exact solution, the parameters $\lambda$, $\mu$, and $L$ can be chosen to satisfying conditions \eqref{one-sided Lip} and \eqref{Lip}. The parameter $\theta > 0$ comes from the Young's inequality, with the condition $ \theta <(1-\lambda)/5$  ensuring that 	$(1-\lambda -5\theta)>0$, analogous to the role of $\epsilon_1$ in Theorem \ref{linear problem error estimates}.
	In particular, setting $\mu=0$ and applying Theorem \ref{semilinear results} yields the following estimate:
	\begin{equation}\label{nonlinear estimate2}
		\begin{aligned}
			&\frac{1-\lambda-5\theta}{4}\int_{0}^{T}\|\hat{U}(t)-U(t)\|^2dt\\ &\leq|\hat{e}(T)|^2+\big(1-\lambda-5\theta\big)\int_{0}^T \big(\|e(t)\|^2+\|\hat{e}(t)\|^2\big)dt \\
			&\leq |e(0)|^2+ \big(1+\frac{L^2}{4\theta}\big)\int_{0}^{T}  \|\hat{U}(t)-U(t)\|^2dt+ \frac{1}{\theta}\int_{t^0}^{t^1}  \|R_{b_1}(t)\|_{\star}^2dt \\
			&\quad   +\frac{1}{\theta}\sum_{i=2}^{N} \int_{t^{i-1}}^{t^{i}}   \bigg(\big\|\big(\varphi_1(-k_{i}A)-I\big) R(t)\big\|_{\star}^2 +\|\varphi_1(-k_{i}A)R_b(t)\|_{\star}^2 \\
			&\quad +\Big\|r_{i}\Big[\frac{1}{2}\varphi_1(-k_{i}A) -\varphi_2(-k_{i}A)\Big] \big(B^{i-1} -B^{i-2}\big)\Big\|_{\star}^2\bigg)dt.
		\end{aligned}
	\end{equation}
\end{remark}

\begin{remark}[A priori vs. a posteriori error estimates]
	For parabolic problems \eqref{nonlinear problem}, Theorem \ref{semilinear results} gives the a  posteriori error bounds of  the E-Adams2 method, and the a priori error estimates for $s$-step exponential Adams methods  was derived in  \cite[Theorem 4.3]{Hochbruck2011}.   The essential difference between the a priori and a posteriori error estimates is that  the former depends on the exact solution (which is typically unknown), while the latter depends only on the discretization parameters and the data of the problem and  is computable,  thereby enabling  adaptive computations.
\end{remark}

 \section{An adaptive algorithm}\label{sec5}
Taking the linear parabolic equation as an example, we introduce the adaptive E-Adams2 method based on effective error control. Let $\mathrm{Tol}$ be the tolerance error, given the initial step-size $k_0$, the maximum time step-size $k_{\max}$, the parameters $\delta_1$, $\delta_2$, $\delta_3\in (0,1)$, and the iteration counter $Count$. A posteriori error estimator \eqref{discrete estimates for linear} satisfies the  inequality:
\begin{equation}\label{Tol}
	\begin{aligned}
	\mathrm{Tol} & \geq |e(0)|^2+\int_{0}^{t^N} \|\hat{U}(t)-U(t)\|^2dt +\frac{2\epsilon_1}{3}\int_{t^0}^{t^{1}} \|R_{f_1}(s)\|_{\star}^2ds \\
	&\quad +2\epsilon_1\sum_{n=2}^{N} \int_{t^{n-1}}^{t^{n}}   \bigg( \big\|\big(\varphi_1(-k_{n}A)-I\big)R(t)\big\|_{\star}^2+\|\varphi_1(-k_{n}A)R_f(t)\|_{\star}^2
	\\
	& \quad  + \Big \|r_{n}\big[\frac{1}{2}\varphi_1(-k_{n}A)-\varphi_2(-k_{n}A)\big] \big(f^{n-1}-f^{n-2}\big)\Big \|_{\star}^2 \bigg)dt..
	\end{aligned}
	\end{equation}
For each interval $J_{n}$, $n=1,\ldots,N$, we define the local error estimator  $\mathcal{E}_{\epsilon_1}^{n}$ as follows:

For $n=1$, 
\begin{equation*}
	\mathcal{E}_{\epsilon_1}^{1}=\frac{1}{T}|e(0)|^2+  \frac{k_1^4}{120}\|\bar{\partial}(f^1-AU^1)\|^2+\frac{2\epsilon_1}{k_1} \int_{J_1}\|R_{f_1}(t)\|_{\star}^2dt.\\
\end{equation*}

For $n\geq2$,
\begin{equation*}
	\begin{aligned}
		\mathcal{E}_{\epsilon_1}^{n}=&   \frac{1}{T}|e(0)|^2+ \frac{k_n^4}{120}\|\varphi_1(-k_nA)\bar{\partial} f^{n-1}\|^2+\frac{2\epsilon_1}{k_n} \int_{J_n}   \bigg( \big\|\big(\varphi_1(-k_{n}A)-I\big)R(t)\big\|_{\star}^2\\
		&+\|\varphi_1(-k_{n}A)R_f(t)\|_{\star}^2+
		\Big\|r_{n}\big[\frac{1}{2}\varphi_1(-k_{n}A)-\varphi_2(-k_{n}A)\big] \big(f^{n-1}-f^{n-2}\big)\Big \|_{\star}^2 \bigg)dt.
	\end{aligned}
\end{equation*}
Once  $\mathcal{E}_{\epsilon_1}^{n} \leq \mathrm{Tol}/T$, then inequality \eqref{Tol} holds. At each time step, we compute $\mathcal{E}_{\epsilon_1}^{n}$ and carry out the following step‑size selection:
\begin{itemize}
	\item If $\mathcal{E}_{\epsilon_1}^{n} \le \delta_1 \mathrm{Tol}/T$, the current step‑size $k_n$ is very efficient; we then try to enlarge the step‑size for the next step by setting $k_{n+1} = \min(k_{\max},\, k_n / \delta_2)$.
	\item If $\delta_1\mathrm{Tol}/T < \mathcal{E}_{\epsilon_1}^{n} \le \mathrm{Tol}/T$, the step‑size $k_n$ is acceptable, and we keep $k_{n+1} = k_n$.
	\item Otherwise ($\mathcal{E}_{\epsilon_1}^{n} > \mathrm{Tol}/T$), the current step is rejected; we reduce the step-size $k_n=\delta_3k_n$ and repeat the current step.
\end{itemize}
We provide the pseudocode of the adaptive E-Adams2 method in Algorithm \ref{alg:time step-size}.
\begin{algorithm}
	\caption{Time step-size control}
	\label{alg:time step-size}
	\begin{algorithmic}
		\STATE{ Choose parameters: $\mathrm{Tol}$, $k_0$, $k_{\max}$, $\delta_1$, $\delta_2$, $\delta_3$}
		\STATE{Initialization: $U^0$,  $Count=0$, $t^{0}$}
		\STATE{Set $k_n:=k_{n-1}$}
		\STATE{Compute $U^{n}$ and  $\mathcal{E}_{\epsilon_1}^{n}$}
		\IF{$\mathcal{E}_{\epsilon_1}^{n}\leq \delta_1\mathrm{Tol}/T$}
		\STATE{$t^{n}:=t^{n-1}+k_n$}
		\STATE{$Count=Count+1$}
		\STATE{$k_{n+1}=\min(k_{\max},k_n/{\delta_2}$})
		\ELSIF{$\delta_1\mathrm{Tol}/T <\mathcal{E}_{\epsilon_1}^{n}\leq \mathrm{Tol}/T$}
		\STATE{$t^{n}:=t^{n-1}+k_{n}$}
		\STATE{$Count=Count+1$}
		\STATE{$k_{n+1}=k_{n}$}
		\ELSE
		\STATE{$k_n=\delta_3k_{n}$}
		\ENDIF
	\end{algorithmic}
\end{algorithm}

\begin{remark}
	Algorithm~\ref{alg:time step-size} clearly presents the adaptive E-Adams2 method based on the reliable error bound $\mathcal{E}_{\epsilon_1}^{n}$ for linear problems. The extension to semilinear problems is straightforward: using the a posteriori error estimate \eqref{nonlinear estimate2}, we compute $\mathcal{E}_{\theta}^{n}$, defined analogously to $\mathcal{E}_{\epsilon_1}^{n}$ by replacing $f$ and $\epsilon_1$ by $B$ and $\theta$.
\end{remark}

 \section{Numerical examples}\label{sec6}
 In this section, linear and semilinear numerical examples are studied to illustrate the effectivity of  the  error estimators and and the efficiency of Algorithm \ref{alg:time step-size}. In our computation, the products  of the matrix-valued functions $\varphi_{j}(-k_{n}\mathbf{A})$ and vectors  are calculated by the Pad\'{e} approximation \cite{Berland2007}, and  we use  the Gauss-Legendre quadrature formula with three nodes to evaluate all the integrals from $t^{n-1}$ to $t^{n}$. The parameter  $\epsilon_1$ in Theorem \ref{linear problem error estimates} is problem-dependent and its choice may affects the sharpness of the error bounds.  In the numerical experiments below, we choose  $\epsilon_1=7/4$ for all linear test cases.
    \begin{example}\label{example1}
    We first consider the following  linear problem (see \cite{Chen2004,Huang2013})
    \begin{equation*}\label{linear problem1}\left\{
    \begin{array}{l}
    	u_t(x,t)=u_{xx}(x,t)+f(x,t), \quad x \in [0,1],  \quad t \in[0,1], 
    	\cr\noalign{\vskip1.5truemm}
    	u(0,t)=u(1,t)=0,\qquad \qquad \quad t \in [0,1],
    \end{array}
    \right.\end{equation*}
   where the right term $f(x,t)$ and its initial value are  chosen in such a way that the exact solution is 
   \begin{equation}\label{pro1solution}
   	u(x,t)=0.1\times \sin(\pi x) \times (1-e^{-10000*(t-0.5)^2}).
   	\end{equation}
        \end{example}

    Let us denote the error $e$ at time $T$ by $Err_T:=|e(T)|$,  the  discrete maximum norm and the $L^2(0,T;V)$-norm of the error are defined by
    \begin{equation*}
Err_{\infty}:=\max\limits_{1\leq n\leq N} |e(t^n)|, \qquad  Err_1:=\Big(\int_0^T \|e(s)\|^2ds\Big)^{\frac{1}{2}}.
\end{equation*}
   We denote the square root of $\epsilon_U$ by  $\mathcal{E}_U$, and the a posteriori quantities  $\mathcal{E}_{R_{f}}$, $\mathcal{E}_R$, and $\mathcal{E}_f$ are respectively defined as follows:
   \begin{equation*}
   \begin{aligned}
     \mathcal{E}_{R_{f}}&:=\Big(\int_{t^{0}}^{t^1}\|R_{f_1}(s)\|_{\star}^2+\sum\limits_{n=2}^{N} \int_{t^{n-1}}^{t^{n}} \|\varphi_1(-k_{n}A)R_f(s)\|_{\star}^2ds \Big)^{\frac{1}{2}},\\
     \mathcal{E}_{R}&:=\Big(\sum\limits_{n=2}^{N}\int_{t^{n-1}}^{t^{n}} \big\|\big(\varphi_1(-k_{n}A)-I\big)R(s)\big\|_{\star}^2ds \Big)^{\frac{1}{2}},
     \end{aligned}
   \end{equation*}
   and
   \begin{equation*}
     \mathcal{E}_{f}:=\Big(\sum\limits_{n=2}^{N}k_n \Big\|r_{n}\Big[\frac{1}{2}\varphi_1(-k_{n}A)-\varphi_2(-k_{n}A)\Big]\big(f^{n-1}-f^{n-2}\big)\Big\|_{\star}^2\Big)^{\frac{1}{2}}.
   \end{equation*}
We spatially discretize the operator $A:=-\Delta$ into a symmetric positive definite matrix $\mathbf{A}$, and evaluate all the a posteriori quantities in the fully discrete setting. In this framework, the discrete dual norm is computed as 
\begin{equation*}
\big\|\big(\varphi_1(-k_{n}\mathbf{A})-\mathbf{I}\big)R(s)\big\|_{\star}^2 =h^d\Big[\big(\varphi_1(-k_{n}\mathbf{A})-\mathbf{I}\big)R(s)\Big]^{\intercal}
\mathbf{A}^{-1}\Big[\big(\varphi_1(-k_{n}\mathbf{A})-\mathbf{I}\big)R(s)\Big],
\end{equation*}
where $h$ and $d$ denote the mesh size and dimension, respectively. The other discrete norms are computed similarly.  We then verify the temporal convergence rates of these quantities for the time-discrete scheme. From  \eqref{linear error},  the effectivity indices $ei_{L}$ and $ei_{U}$  of the lower and upper estimators are introduced  by:
\begin{equation*}
  ei_L:=\frac{lower \ estimator}{\sqrt{Err^2_T+{8}Err^2_1/{7}}}, \qquad ei_U:=\frac{upper \  estimator}{\sqrt{Err^2_T+{8}Err^2_1/{7}}};
\end{equation*}
with $lower \ estimator=\sqrt{\mathcal{E}^2_U/8}$ and $upper \ estimator=\sqrt{ \mathcal{E}^2_U+{7}\big(\mathcal{E}^2_{R_{f}}+\mathcal{E}^2_R+\mathcal{E}^2_f\big)/{2}}$.

   The time derivative of solution \eqref{pro1solution} becomes very steep around  $t=1/2$; consequently, the solution varies rapidly over a short time interval. We use the standard  central finite difference  with   the   spatial  mesh size $h_{x}=1/300$  to discretize  the space derivative $u_{xx}$, and  run  the E-Adams2 method \eqref{linear method} with  a uniform temporal step-size $k=1/N$ (i.e., $r_{n}=1$ for $n=2,3,\ldots, N$), $N=128, 256, 512, 1024, 2048$.  The convergence rates of the exact  errors $Err_T$, $Err_{\infty}$, and $Err_{1}$ are shown in Table \ref{table1-1}, and the discrete $L^2$ errors  at all time nodes $t^{n}$ are plotted in Figure \ref{fig5.1} for $N=128$ and $N=2048$.
      The a posteriori quantities $\mathcal{E}_{U}$, $\mathcal{E}_{R_{f}}$, $\mathcal{E}_R$,  and their convergence orders are indicated in  Table \ref{table1-2}. Note   that  the convergence orders of these a posteriori quantities   are  slightly affected by the matrix-valued functions $\varphi_{j}(-k \mathbf{A})$ with relatively larger temporal step-sizes, and   the impact of  the  matrix-valued functions $\varphi_{j}(-k \mathbf{A})$ is decreasing as the temporal step-size decreases.   The convergence  order of the a posteriori quantity $\mathcal{E}_{f}$, and the effectivity indices $ei_L$ and $ei_U$ of the method   \eqref{linear method}   are reported in Table \ref{table1-3}.  We observe that the effectivity index  $ei_L$  of the lower estimator is around $0.07$,
       and  the effectivity index  $ei_U$  of the  upper estimator  is  around $48$. 
       
     Then, we set $k_0=1/50$, $k_{\max}=1/10$, $\delta_1={1}/{4}$, $\delta_2={2}/{3}$, $\delta_3={1}/{2}$, and apply the adaptive E-Adams2 method for Example \ref{example1}. Numerical results are reported in Table \ref{table1-4}, and we show the time step-size trajectory and the $L^2$ error of Algorithm \ref{alg:time step-size} at each step with $\mathrm{Tol}=1/10$ in Figure \ref{fig5.2}.  It can be observed that the time step-size drops sharply around $t=1/2$, coinciding with a peak in the $L^2$ error. Away from $t=1/2$, the step-size can be significantly larger while still satisfying  the condition $upper \ estimator\leq \sqrt{\mathrm{Tol}}$. Numerical results validate a posteriori error analysis established in Section \ref{sec3.2}, and confirm the high efficiency of  Algorithm \ref{alg:time step-size}.
     
\begin{table}[htbp]
	\small
\centering
\caption{The exact errors  $Err_T$, $Err_{\infty}$,  $Err_1$, and their convergence orders for the E-Adams2 method \eqref{linear method} for  Example \ref{example1}}\label{table1-1} \vskip  -2mm
\renewcommand\arraystretch{1.4}
\begin{tabular}{c|c|c|c|c|c|c}
\hline
    N& $Err_{T}$&  order  & $Err_{\infty}$ & order & $Err_1$ &order \\
    \hline
128&6.4482e-07 &  &2.5184e-02 &  &8.3331e-03  &\\
 \hline
256&6.4535e-07 &-	&8.2499e-03 &1.6101  &2.5509e-03  &1.7079\\
 \hline
512&6.4535e-07 &- &2.2034e-03 & 1.9046 &6.7357e-04 &1.9211\\
 \hline
1024&6.4535e-07&-   &5.5915e-04&1.9784  &1.7064e-04 & 1.9809\\
 \hline
2048&6.4535e-07&-  &1.3983e-04&1.9995  & 4.2819e-05  &1.9946\\
 \hline
       \end{tabular}
\end{table}

\begin{table}[htbp]
\small
\centering
\caption{The a posteriori  quantities  $\mathcal{E}_{U}$, $\mathcal{E}_{R_{f}}$, $\mathcal{E}_R$,  and their convergence orders for the E-Adams2 method \eqref{linear method}  for  Example \ref{example1}}\label{table1-2} \vskip -2mm
\renewcommand\arraystretch{1.4}
\begin{tabular}{c|c|c|c|c|c|c}
\hline
N& $\mathcal{E}_{U}$    & order  & $\mathcal{E}_{R_{f}}$  & order   & $\mathcal{E}_R$ & order     \\
    \hline

128  &2.0181e-03 &   &2.1645e-01  &    &7.9083e-03 &   \\
 \hline
256&5.7119e-04 &1.8210  &6.9191e-02  &1.6453    &1.4462e-03   & 2.4511   \\
 \hline
512&1.4763e-04 & 1.9520  &1.8423e-02  &1.9091     &2.7693e-04   &2.3846 \\
 \hline
1024&3.7306e-05&1.9845  &4.6910e-03  &1.9735   &6.1914e-05   & 2.1612 \\
 \hline
2048& 9.3630e-06&1.9944   & 1.1796e-03 &1.9916    & 1.4979e-05  &2.0473   \\
 \hline
       \end{tabular}
\end{table}

\begin{table}[htbp]
\small
\centering
\caption{The a posteriori quantity   $\mathcal{E}_{f}$ and its convergence order,  lower and upper estimators, and the  effectivity indices for the E-Adams2 method \eqref{linear method} for  Example \ref{example1}}\label{table1-3} \vskip -2mm
\renewcommand\arraystretch{1.3}
\begin{tabular}{c|c|c|c|c|c|c}
\hline
N &$\mathcal{E}_f$  &order   &lower estimator   &upper estimator    & $ei_{L}$ & $ei_U$ \\
    \hline
128&1.8421e-03 &   &7.1351e-04 &4.0522e-01  &0.0801  & 45.4874 \\
 \hline
256& 5.2141e-04& 1.8209  & 2.0194e-04  &1.2948e-01  &0.0741 &47.4805\\
 \hline
512& 1.3477e-04&  1.9519  &5.2195e-05 &3.4472e-02 & 0.0725&47.8724\\
 \hline
1024&3.4056e-05& 1.9845 &1.3190e-05 & 8.7772e-03  &0.0723 & 48.1153\\
 \hline
2048&8.5472e-06&  1.9944 &3.3103e-06 &2.2071e-03 &0.0723 & 48.2105 \\
 \hline
       \end{tabular}
\end{table}

\begin{figure}[ht]
	\centering
	\begin{subfigure}[c]{0.48\textwidth}
		\includegraphics[width=\textwidth, height=5.7cm]{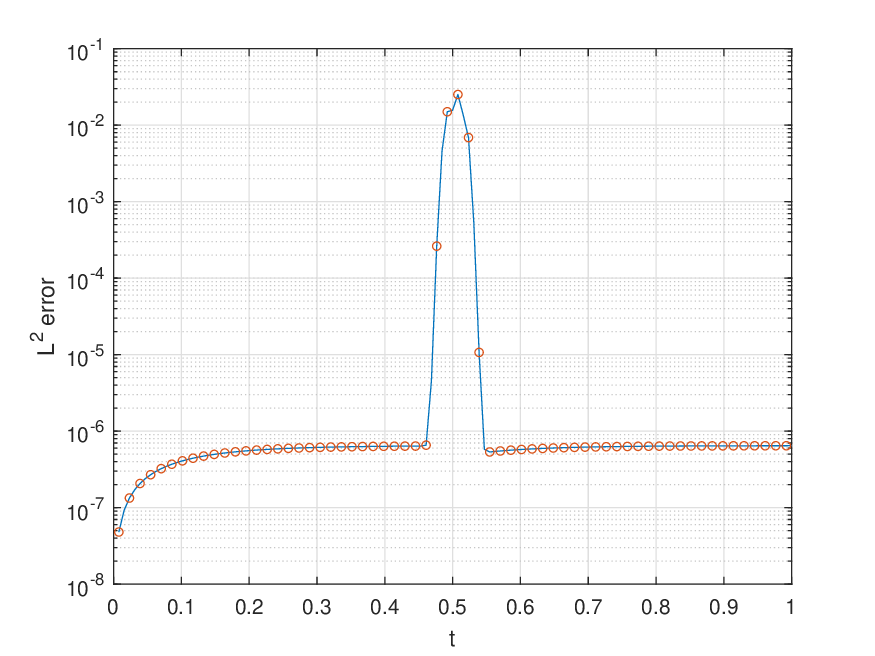}
	\end{subfigure}
	\hfill
	\begin{subfigure}[c]{0.48\textwidth}
		\includegraphics[width=\textwidth, height=5.7cm]{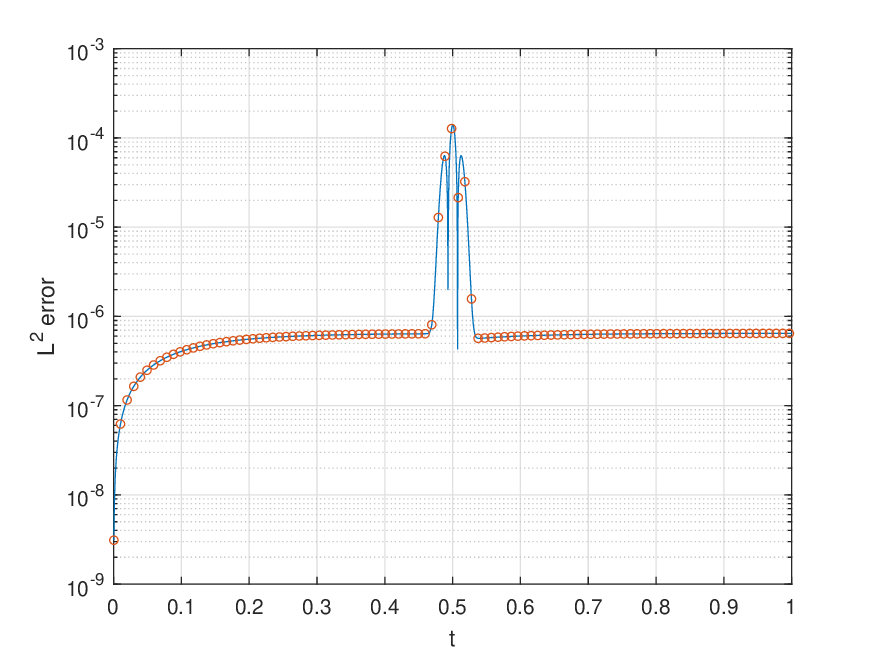}
	\end{subfigure}
	\caption{Example \ref{example1}: The discrete $L^2$ error of the E-Adams2 method for $N=128$ (left) and $N=2048$ (right)}\label{fig5.1}
\end{figure}

\begin{figure}[ht]
	\centering
	\begin{subfigure}[c]{0.49\textwidth}
		\includegraphics[width=\textwidth, height=5.8cm]{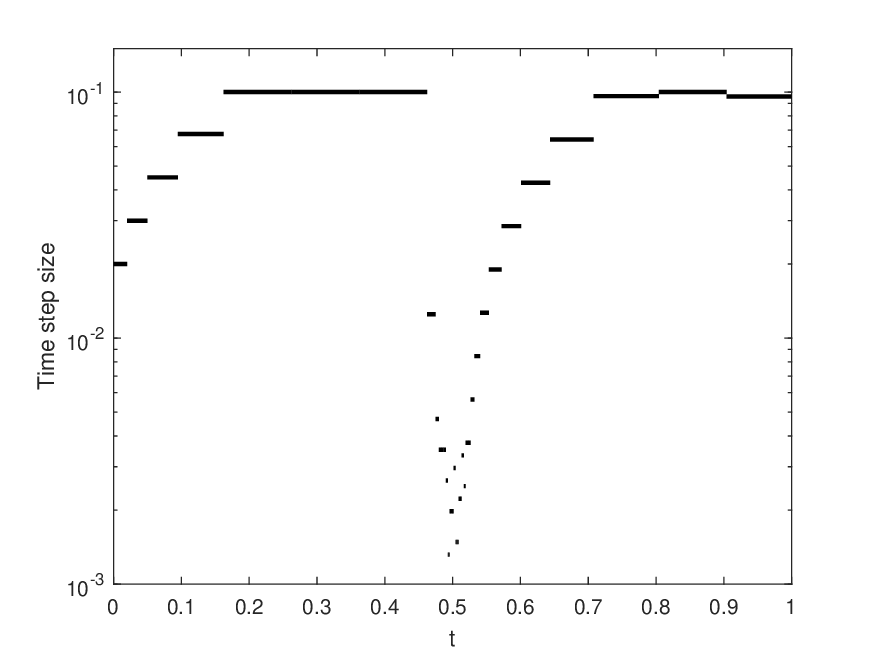}
	\end{subfigure}
	\hfill
	\begin{subfigure}[c]{0.49\textwidth}
		\includegraphics[width=\textwidth, height=5.8cm]{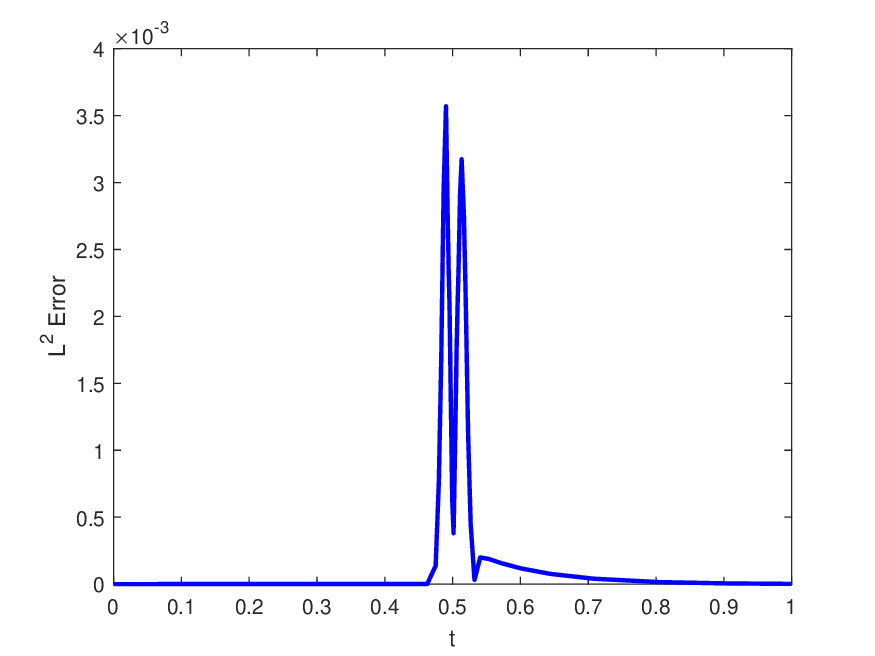} 
	\end{subfigure}
	\caption{Example \ref{example1}: Time step-size trajectory (left) and the $L^2$ error (right) of the adaptive E-Adams2 method with $\mathrm{Tol}=1/10$ }\label{fig5.2}
\end{figure} 

\begin{table}[htbp]
	\small
	\centering
	\caption{Initial parameters and numerical results for the adaptive E-Adams2 method for Example \ref{example1}}\label{table1-4} \vskip  -2mm
	\renewcommand\arraystretch{1.4}
	\begin{tabular}{c|c|c|c|c|c|c}
		\hline
		$\mathrm{Tol}$ & $Err_{T}$&  $Err_{\infty}$  &  $Err_1$ & upper estimator & $ei_U$ &Count\\
		\hline
		1e-01&1.6523e-06 & 3.5699e-03 &1.4995e-03 & 4.1031e-02 &25.5960  &38\\
		\hline
		1e-02&1.3705e-06 &1.2531e-03	&5.7092e-04 &1.4598e-02 &23.9173  &54\\
		\hline
		1e-03&1.9529e-07 &4.4924e-04 &2.0705e-04 & 4.4900e-03 &20.2853&83\\
		\hline
		1e-04&2.5164e-07&1.5060e-04 &7.4698e-05&1.5562e-03  &19.4878 & 130\\
		\hline
		1e-05&4.2756e-07&5.3628e-05 &2.7628e-05&5.4569e-04  & 18.4737  &207\\
		\hline
	   1e-06&5.9183e-07&2.1819e-05  &1.0888e-05&1.9005e-04  & 16.3068 &346\\
	   	\hline
	\end{tabular}
\end{table}

\begin{example}\label{example2}
 	Next, we take equation \eqref{linear problem1} on $[0,1] \times [0,10]$ with the exact solution $u(x,t)=\sin(\pi x)e^{-800\sin(\pi t/2 -1)^2}sin(4\pi t)$.
\end{example}

In this example, we adopt the central finite difference  scheme with a fixed spatial mesh size $h_{x}=1/300$  to approximate $u_{xx}$, and run the E-Adams2 method \eqref{nonlinear method} with  a uniform temporal step-size
$k=10/N$, $N=320, 640, 1280, 2560,5120$.  Table \ref{table2-1} presents the  exact errors
$Err_T$, $Err_{\infty}$,  $Err_1$, and their convergence rates. Figure \ref{fig5.3} shows the discrete $L^2$ errors at all time nodes $t^n$ for $N=320$ and $N=5120$. The a posteriori  quantities  $\mathcal{E}_{U}$, $\mathcal{E}_{R_{f}}$, $\mathcal{E}_R$, $\mathcal{E}_f$,  and their convergence orders are reported  in Tables \ref{table2-2} and \ref{table2-3}. From Table \ref{table2-3}, the effectivity indices  $ei_L$ and $ei_U$ of lower and upper estimators seem to be asymptotically constant (around $0.07$ and $6.0$, respectively). Taking the parameters $k_0=1/10$, $k_{\max}=1/2$, $\delta_1={1}/{5}$, $\delta_2={2}/{3}$, $\delta_3={1}/{2}$, and applying the adaptive algorithm \ref{alg:time step-size} to Example \ref{example2}, we obtain the computational results reported in Table \ref{table2-4}.  Figure \ref{fig5.4} displays the time step-size trajectory and the $L^2$ error at each step with $\mathrm{Tol}=1/10$; the time step-size drops sharply around $t=1$, $5$, and $9$, while the \(L^2\) error reaches its peaks.

For linear problems, the adaptive E-Adams2 method is more efficient, using fewer steps to achieve the same or higher accuracy, than the adaptive integrating factor midpoint method \cite{Hu2026} and the adaptive finite element method \cite{Huang2013}.

\begin{table}[htbp]
	\small
	\centering
	\caption{The exact errors  $Err_T$, $Err_{\infty}$,  $Err_1$, and their convergence orders for the E-Adams2 method \eqref{linear method} for  Example \ref{example2}}\label{table2-1} \vskip  -2mm
	\renewcommand\arraystretch{1.4}
	\begin{tabular}{c|c|c|c|c|c|c}
		\hline
		N& $Err_{T}$&  order  & $Err_{\infty}$ & order & $Err_1$ &order \\
		\hline
	320&8.3076e-10 &  &8.0698e-02 &  &1.7154e-01  &\\
		\hline
		640&8.6562e-10 &-	&1.9899e-02 &2.0198  &4.3820e-02  &1.9689\\
		\hline
		1280&8.7314e-10 &- &4.9129e-03 & 2.0181 &1.0939e-02 & 2.0022\\
		\hline
		2560&8.7489e-10&-   & 1.2223e-03&2.0069  &2.7251e-03 & 2.0051\\
		\hline
		5120&8.7531e-10&-  &3.0620e-04&1.9971  & 6.8122e-04 &2.0001\\
		\hline
	\end{tabular}
\end{table}

\begin{table}[htbp]
	\small
	\centering
	\caption{The a posteriori  quantities  $\mathcal{E}_{U}$, $\mathcal{E}_{R_{f}}$, $\mathcal{E}_R$,  and their convergence orders for the E-Adams2 method \eqref{linear method}  for  Example \ref{example2}}\label{table2-2} \vskip -2mm
	\renewcommand\arraystretch{1.4}
	\begin{tabular}{c|c|c|c|c|c|c}
		\hline
		N& $\mathcal{E}_{U}$    & order  & $\mathcal{E}_{R_{f}}$  & order   & $\mathcal{E}_R$ & order     \\
		\hline
		320  &3.5480e-02 &   &4.8087e-01  &    &8.5454e-02 &   \\
		\hline
		640&9.7374e-03 &1.8654  &1.3697e-01  &1.8117   &1.6800e-02   & 2.3467   \\
		\hline
		1280&2.5404e-03 & 1.9385  &3.6085e-02  &1.9244    &3.8267e-03  &2.1343 \\
		\hline
		2560& 6.4814e-04&1.9707  &9.2291e-03  &1.9672  &9.3606e-04   & 2.0314 \\
		\hline
		5120& 1.6365e-04&1.9857   & 2.3317e-03 &1.9848   &  2.3342e-04  &2.0037  \\
		\hline
	\end{tabular}
\end{table}

\begin{table}[htbp]
	\small
	\centering
	\caption{The a posteriori quantity   $\mathcal{E}_{f}$ and its convergence order,  lower and upper estimators, and effectivity indices for the E-Adams2 method \eqref{linear method} for  Example \ref{example2}}\label{table2-3} \vskip -2mm
	\renewcommand\arraystretch{1.3}
	\begin{tabular}{c|c|c|c|c|c|c}
		\hline
		N &$\mathcal{E}_f$  &order   &lower estimator   &upper estimator    & $ei_{L}$ & $ei_U$ \\
		\hline
		320&3.2337e-02 &   &1.2544e-02 &9.1640e-01  &0.0684  & 4.9972 \\
		\hline
		640& 8.8854e-03& 1.8637  &3.4427e-03 & 2.5889e-01  &0.0735 &5.5265\\
		\hline
		1280& 2.3189e-03&  1.9380  &8.9818e-04 & 6.8074e-02 & 0.0768&5.8213\\
		\hline
		2560&5.9165e-04& 1.9706 &2.2915e-04 & 1.7402e-02 &0.0787 &  5.9735\\
		\hline
		5120&1.4939e-04&  1.9857 &5.7858e-05 & 4.3959e-03 &0.0794& 6.0363 \\
		\hline
	\end{tabular}
\end{table}

\begin{figure}[ht]
	\centering
	\begin{subfigure}[c]{0.48\textwidth}
		\includegraphics[width=\textwidth, height=5.8cm]{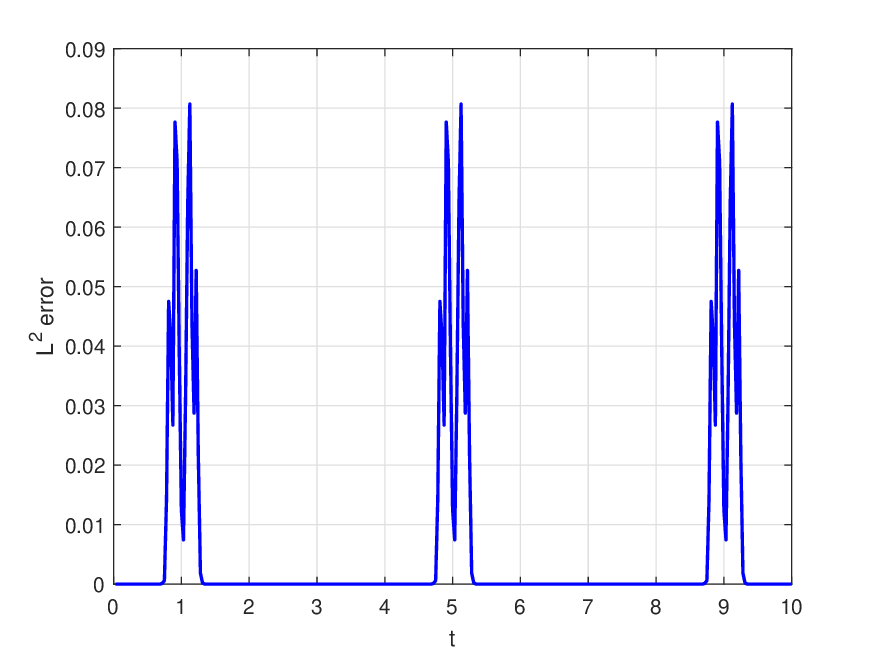}
	\end{subfigure}
	\hfill
	\begin{subfigure}[c]{0.48\textwidth}
		\includegraphics[width=\textwidth, height=5.8cm]{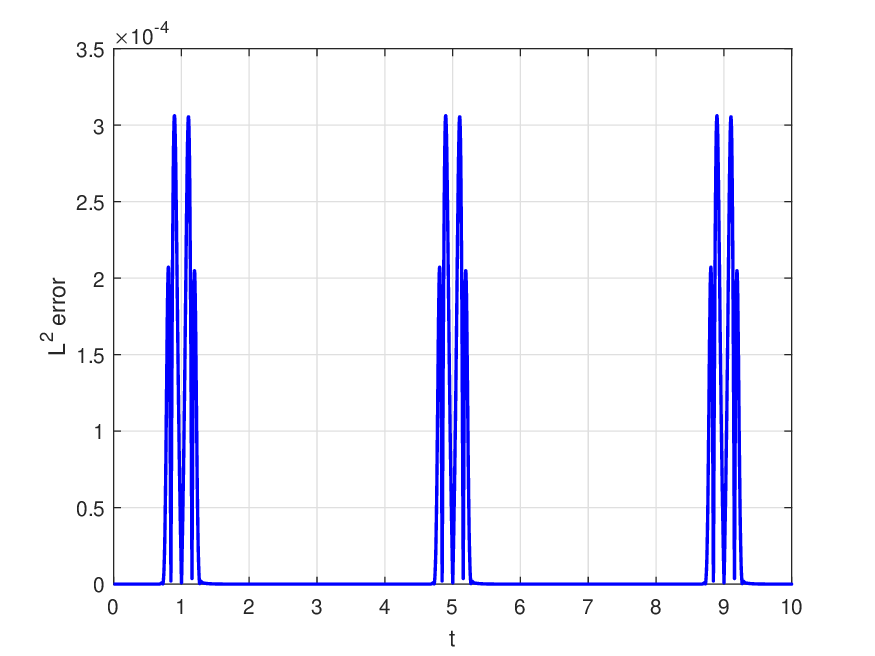}
	\end{subfigure}
	\caption{Example \ref{example2}: The discrete $L^2$ error of the E-Adams2 method for $N=320$ (left) and $N=5120$ (right)}\label{fig5.3}
\end{figure}

\begin{table}[htbp]
	\small
	\centering
	\caption{Initial parameters and numerical results for the adaptive E-Adams2 method for Example \ref{example2}}\label{table2-4} \vskip  -2mm
	\renewcommand\arraystretch{1.4}
	\begin{tabular}{c|c|c|c|c|c|c}
		\hline
		$\mathrm{Tol}$ & $Err_{T}$&  $Err_{\infty}$  &  $Err_1$ & upper estimator & $ei_U$ &Count\\
		\hline
		1e-01&2.5070e-06 &7.9057e-03&1.6988e-02 &7.9970e-02 &4.4034  &212\\
		\hline
		1e-02&4.6659e-07 &2.5679e-03&5.5236e-03 & 2.6560e-02 &4.4980&346\\
		\hline
	  1e-03&1.1958e-07 &8.1046e-04 &1.6126e-03& 8.6699e-03 &5.0289 &580\\
		\hline
		1e-04&5.8134e-08&3.4705e-04 &6.9802e-04&2.8822e-03 &3.8625 & 982\\
		\hline
		1e-05&7.2242e-09&8.2460e-05 &1.6883e-04&9.8746e-04  & 5.4710  &1708\\
		\hline
		1e-06&2.8904e-10&2.7017e-05  &5.4624e-05&3.1356e-04  & 5.3695 &2991\\
		\hline
	\end{tabular}
\end{table}

\begin{figure}[ht]
	\centering
	\begin{subfigure}[c]{0.49\textwidth}
		\includegraphics[width=\textwidth, height=5.8cm]{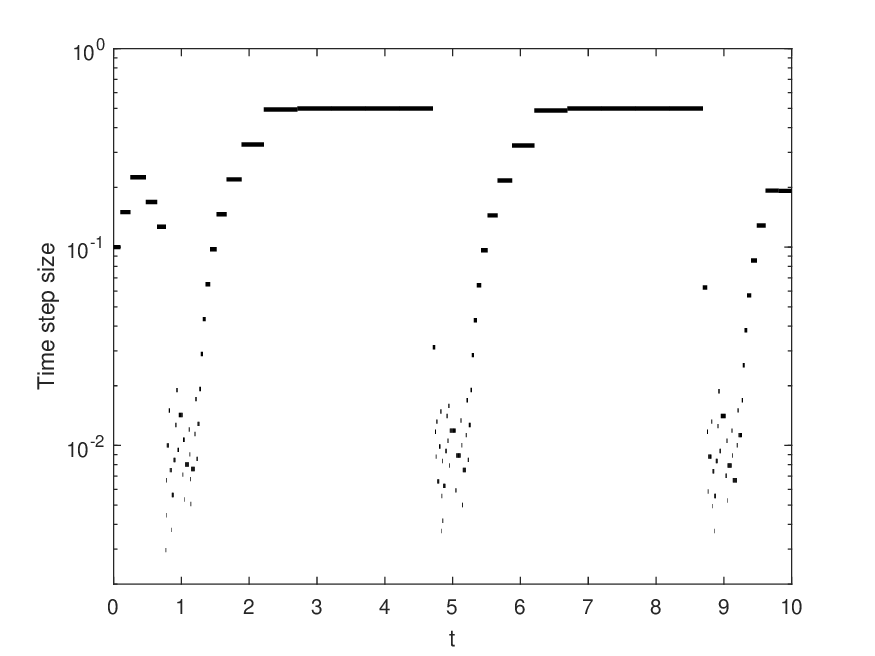}
	\end{subfigure}
	\hfill
	\begin{subfigure}[c]{0.49\textwidth}
		\includegraphics[width=\textwidth, height=5.8cm]{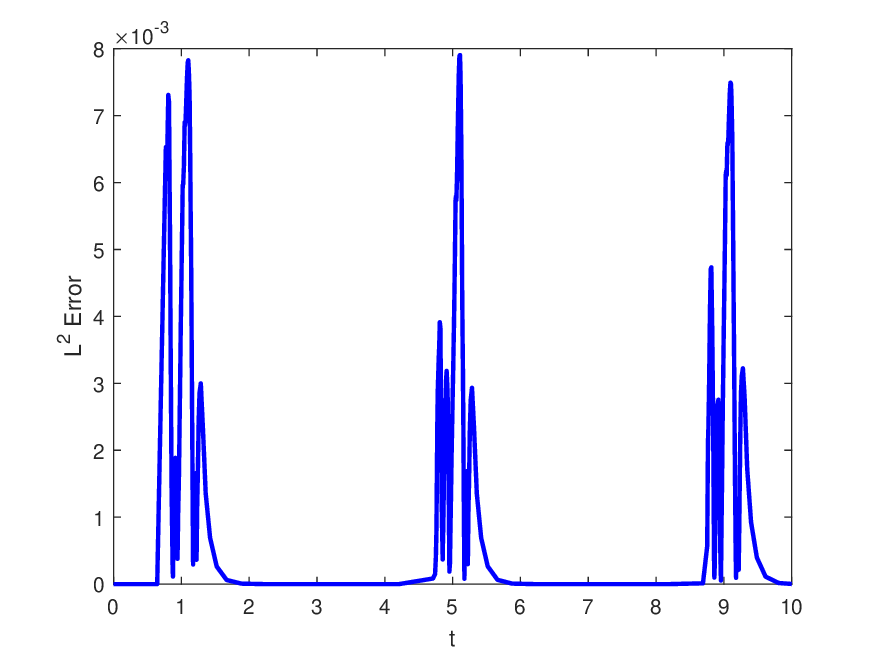} 
	\end{subfigure}
	\caption{Example \ref{example2}: Time step-size trajectory (left) and the $L^2$ error (right) of the adaptive E-Adams2 method with $\mathrm{Tol}=1/10$ }\label{fig5.4}
\end{figure} 

\begin{example}\label{example3}
In the final example, we  consider the following two-dimensional Allen--Cahn equation (see \cite{Ju2021})
in the domain $\Omega=(0,1)^2$:
\begin{equation*}\left\{
\begin{array}{l}
u_t(x,y,t)=\varepsilon^2 \Delta u +b(u),\quad  (x,y)\in \Omega, \quad 0\leq t \leq T,\cr\noalign{\vskip2truemm}
u(x,y,0)=0.1(\sin(3\pi x)\sin(2 \pi y)+\sin(5 \pi x) \sin(5 \pi y)),\quad (x,y)\in \Omega,
\end{array}
\right.\end{equation*}
with the double-well potential   $B(u)=(u^2-1)^2/4$ and $b(u)=-B^{\prime}(u)$.
\end{example}

For the parameter $\varepsilon=0.01$,  the Laplacian operator $\Delta$ is discretized by  the five-point central finite difference  with the spatial mesh size $h_{x}=h_{y}=1/20$. We calculate the E-Adams2 method \eqref{nonlinear method} at  time $T=1$ with the temporal step-size $k=1/N$, $N=30,60,120,240$,  and take the method \eqref{nonlinear method} with  $k=1/3840$ as the reference solution. The convergence rates of the exact errors $Err_T$, $Err_{\infty}$, and  $Err_1$ are shown in Table \ref{table4-1}, and the correct convergence rates are obvious. According to Theorem \ref{semilinear results} and conditions \eqref{one-sided Lip}-\eqref{Lip}, we choose the parameters
 $\lambda=0$, $\mu=1$, $\theta=1/6$, and $L=2$ for the Allen-Cahn equation $u_t=\varepsilon^2 \Delta u+u-u^3$, and denote $\mathrm{E}_{U}$ as
 \begin{equation*}
 	\mathrm{E}_{U}:=\bigg(\int_0^T e^{3\mu(t-s)}|\hat{U}(s)-U(s)|^2ds\bigg)^{\frac{1}{2}}.
 \end{equation*}
 Tables \ref{table4-2} and \ref{table4-3} clearly show the a posteriori quantities  $\mathcal{E}_{U}$, $\mathrm{E}_{U}$, $\mathcal{E}_{R_{b}}$, $\mathcal{E}_R$,  $\mathcal{E}_{b}$, and their second order convergence rates.
Theorem  \ref{semilinear results} indicates $lower \ estimator=\sqrt{\mathcal{E}^2_U/24}$ and $upper \ estimator=\sqrt{(\exp(3)-1)|e(t^1)|^2+7\mathcal{E}^2_U+2 \mathrm{E}^2_{U}+6\big(\mathcal{E}^2_{R_{b}}+\mathcal{E}^2_R+\mathcal{E}^2_b\big)}$, i.e.,
\begin{equation*}
ei_L=\frac{lower \ estimator}{\sqrt{Err^2_T+Err^2_1/3}}, \qquad ei_U=\frac{upper \ estimator}{\sqrt{Err^2_T+Err^2_1/3}}.
\end{equation*}
 It seems that the effectivity indices $ei_{L}$ and $ei_U$ of the method \eqref{nonlinear method}  are  asymptotically constant (around $0.01$ and $38$, respectively).

For semilinear problems,  computational results agree with the a posteriori error estimates established in Theorem \ref{semilinear results}.

\begin{table}[htbp]
\small
\centering
\caption{The exact errors  $Err_T$, $Err_{\infty}$, $Err_1$, and their convergence orders for the E-Adams2 method \eqref{nonlinear method} for  Example \ref{example3}}\label{table4-1} \vskip  -2mm
\renewcommand\arraystretch{1.4}
\begin{tabular}{c|c|c|c|c|c|c}
\hline
    N& $Err_{T}$&  order  & $Err_{\infty}$ & order & $Err_1$ &order \\
    \hline
30&4.5172e-05 &        &4.5172e-05 &         & 6.1260e-06 &  \\
 \hline
60&1.1617e-05 &1.9592  &1.1617e-05 &1.9592   &1.6013e-06 &1.9357  \\
 \hline
120&2.9428e-06 &1.9810  &2.9428e-06 &1.9810  &4.0889e-07  &1.9694 \\
 \hline
240& 7.3858e-07& 1.9944 & 7.3858e-07& 1.9944  &1.0303e-07 &1.9886 \\
 \hline
       \end{tabular}
\end{table}

\begin{table}[htbp]
\footnotesize
\centering
\caption{The a posteriori  quantities  $\mathcal{E}_{U}$,  $\mathrm{E}_{U}$, $\mathcal{E}_{R_{b}}$, $\mathcal{E}_R$,  and their convergence orders for the E-Adams2 method \eqref{nonlinear method}  for  Example \ref{example3}}\label{table4-2} \vskip -2mm
\renewcommand\arraystretch{1.6}
\begin{tabular}{c|c|c|c|c|c|c|c|c}
\hline
N& $\mathcal{E}_{U}$    & order  & $\mathrm{E}_{U}$    & order & $\mathcal{E}_{R_{b}}$  & order   & $\mathcal{E}_R$ & order    \\
\hline
30&  3.4789e-06   &      &2.0280e-05 &        &7.0899e-04 & & 5.3920e-06&            \\
 \hline
60&8.9453e-07 &1.9594  & 5.2125e-06 &1.9600 &1.8275e-04&1.9559&
 1.3703e-06&   1.9764 \\
 \hline
120&2.2679e-07 & 1.9798  &1.3212e-06 &1.9801   &4.6374e-05 &1.9785      &3.4535e-07& 1.9883\\
 \hline
240& 5.7095e-08 &1.9899 &3.3260e-07& 1.9900 &1.1679e-05 &1.9894
 &8.6687e-08&   1.9942 \\
 \hline
       \end{tabular}
\end{table}

\begin{table}[htbp]
\small
\centering
\caption{The a posteriori quantity   $\mathcal{E}_{b}$ and its convergence order,  lower and upper estimators,  and effectivity indices for the E-Adams2 method \eqref{nonlinear method} for  Example \ref{example3}}\label{table4-3} \vskip -2mm
\renewcommand\arraystretch{1.4}
\begin{tabular}{c|c|c|c|c|c|c}
\hline
N &$\mathcal{E}_b$  &order   &lower estimator   &upper estimator    & $ei_{L}$ & $ei_U$ \\
    \hline
30&1.8753e-06 &        &7.1013e-07 &1.7370e-03&0.0157 &38.3354\\
 \hline
60&4.5636e-07& 2.0389  & 1.8259e-07 & 4.4772e-04&0.0157&38.4181 \\
 \hline
120& 1.1241e-07&  2.0215  & 4.6293e-08 &1.1361e-04&0.0157&38.4840\\
 \hline
240&2.7884e-08&  2.0112 &1.1654e-08 &2.8614e-05 &0.0157&38.6164\\
 \hline
       \end{tabular}
\end{table}

\section{Conclusion}\label{sec7} In this paper, we established  optimal order a posteriori error estimates for the explicit E-Adams2 method for parabolic problems. We first presented the continuous approximation $U$  defined  by the linear interpolant of nodal values and its first order residual.  When we applied the usual energy technique to the error equation, we  obtained suboptimal order estimates.
To derive optimal order a posteriori error estimates, we  introduced the E-Adams2 reconstruction $\hat{U}$ of $U$ by making use of   the property of the $\varphi$-functions. Optimal order residual-based a posteriori error estimates were then derived, and we developed an adaptive algorithm based the upper bound. Numerical examples were implemented to confirm the theoretical results.
\bibliographystyle{siamplain}
\bibliography{references}

\end{document}